\newtheoremstyle{nonum}{}{}{\itshape}{}{\bfseries}{.}{ }{\thmnote{#3}}
\newtheorem{thm}{Theorem}[section]
\newtheorem{cor}[thm]{Corollary}
\newtheorem{lem}[thm]{Lemma}
\newtheorem{prop}[thm]{Proposition}
\newtheorem{rem}[thm]{Remark}
\newtheorem{conj}[thm]{Conjecture}
\theoremstyle{nonum}
\newcommand{\R}{\mathbb R}
\newcommand{\RR}{\mathbb R}
\def\K{{\cal K}}
\def\Vol{{\rm Vol}}
\newcommand{\iprod}[2]{\langle #1,#2 \rangle} 
\def\K{{\cal K}}
\begin{document}
\title{Remarks about Mixed Discriminants and Volumes}
\date{}
\author{S. Artstein-Avidan, \ \ D. Florentin, \ \ Y. Ostrover}
\maketitle
\begin{abstract}
In this note we prove certain inequalities for mixed discriminants of positive semi-definite
matrices, and mixed volumes of compact convex sets in ${\mathbb R}^n$.
Moreover, we discuss how the latter are related to the 
 monotonicity  of an 
information functional on the class of convex bodies, which is a geometric analogue of  the classical  Fisher information.

%
\end{abstract}

\section{Introduction and Results}

Starting from the seminal works of A. D. Aleksandrov, the theory of mixed discriminants 
and volumes  serves as a powerful tool for studying  various quantities associated with 
convex bodies, such as volume, surface area, and mean width.
In addition to  their significant role in convex geometry, inequalities emanating from this 
theory - the most famous of which is probably the Alexandrov--Fenchel inequality - have numerous 
applications and deep connections to various fields, such as differential and algebraic
geometry, probability theory, combinatorics, and more.  We refer the reader to~\cite{Gar, Sch}, 
and the references therein, for a more detailed exposition of this subject.

In this paper  we prove some inequalities for mixed discriminants of positive semi-definite
matrices, and mixed volumes of compact convex sets in ${\mathbb R}^n$, denoted
by $D(A_1,\dots,A_n)$ and $V(K_1,\dots,K_n)$ respectively (the precise 
definitions will be given in the following sections). 
Our work is partially motivated by a result of Hug and Schneider regarding a certain inequality for mixed
volumes of zonoids (Theorem 2 in~\cite{HS}), which is conjectured to hold (ibid., page 2643) 
for arbitrary convex bodies (cf.~inequality (16) in~\cite{BW}).
Our first result is the following simple observation regarding  mixed discriminants which seems to have been
overlooked in the literature. 

\begin{thm} \label{Thm-about-mixed-discriminants}
For any positive semi-definite $n \times n$ matrices $A_1,A_2,A_3$ one has:
\begin{equation}\label{Ineq_Disc}
D(A_1,A_3[n-1])  D(A_2,A_3[n-1])\ge \frac{n-1}{n}  \, D(A_1,A_2,A_3[n-2])  D(A_3[n]).
\end{equation}
Moreover, equality holds if and only if one of the following three cases occurs: (i)
 $A_3$ is invertible and $A_1A_3^{-1}A_2=0$,  (ii)
 $A_3$ is of rank at most $n-2$, (iii) $A_3$ is of rank $n-1$ and either $A_1$ or $A_2$ satisfies ${\rm Im}(A_i) \subset {\rm Im}(A_3)$.
\end{thm}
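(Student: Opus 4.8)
The plan is to separate the cases $\det A_3\neq 0$ and $\det A_3=0$, and to exploit two standard features of the mixed discriminant: its multilinearity and symmetry, and its congruence-covariance $D(TA_1T^{t},\dots,TA_nT^{t})=(\det T)^2\,D(A_1,\dots,A_n)$, which follows by matching coefficients in $\det\!\big(T(\sum\lambda_iA_i)T^{t}\big)=(\det T)^2\det(\sum\lambda_iA_i)$. Suppose first that $A_3$ is positive definite and put $B_i=A_3^{-1/2}A_iA_3^{-1/2}$ for $i=1,2$. Taking $T=A_3^{-1/2}$ in the covariance identity turns each copy of $A_3$ into $I$ at the cost of one factor $\det A_3$ per mixed discriminant; since each side of (\ref{Ineq_Disc}) is a product of two mixed discriminants, the common factor $(\det A_3)^2$ cancels and (\ref{Ineq_Disc}) is equivalent to
\begin{equation*}
D(B_1,I[n-1])\,D(B_2,I[n-1])\ge \frac{n-1}{n}\,D(B_1,B_2,I[n-2]).
\end{equation*}
I would then invoke the elementary identities $D(B,I[n-1])=\tfrac1n\operatorname{tr}(B)$ and $D(B_1,B_2,I[n-2])=\tfrac1{n(n-1)}\big(\operatorname{tr}(B_1)\operatorname{tr}(B_2)-\operatorname{tr}(B_1B_2)\big)$, which collapse the displayed inequality to the single assertion $\operatorname{tr}(B_1B_2)\ge 0$. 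This is immediate from positive semi-definiteness: $\operatorname{tr}(B_1B_2)=\operatorname{tr}(B_1^{1/2}B_2B_1^{1/2})\ge 0$.

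This already yields case (i): equality in the definite regime forces $\operatorname{tr}(B_1^{1/2}B_2B_1^{1/2})=0$, hence $B_1^{1/2}B_2B_1^{1/2}=0$, hence $B_1B_2=0$; conjugating back by $A_3^{1/2}$ gives $A_1A_3^{-1}A_2=0$. When instead $\det A_3=0$, the right-hand side of (\ref{Ineq_Disc}) vanishes and the inequality reduces to the nonnegativity of its left-hand side. For this I would use the polarization formula $D(v_1v_1^{t},\dots,v_nv_n^{t})=\tfrac1{n!}\det[v_1,\dots,v_n]^2$: substituting the spectral decompositions of the $A_i$ and expanding by multilinearity writes any mixed discriminant of positive semi-definite matrices as a nonnegative combination of squared determinants, so each factor on the left is $\ge 0$ and the inequality holds.

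The same expansion governs the equality analysis in the singular case. A summand $\det[w,u_{j_2},\dots,u_{j_n}]^2$ occurring in $D(A_i,A_3[n-1])$, where $w$ is an eigenvector of $A_i$ and the $u_{j}$ are eigenvectors of $A_3$ with positive eigenvalue, is nonzero exactly when these $n$ vectors are linearly independent. If $\operatorname{rank}A_3\le n-2$ this never happens, since the $n-1$ vectors $u_{j_2},\dots,u_{j_n}$ already cannot be independent; hence both left-hand factors vanish and equality holds, which is case (ii). If $\operatorname{rank}A_3=n-1$, the surviving summands force $\{u_{j_2},\dots,u_{j_n}\}$ to be a basis of $\operatorname{Im}A_3$, so $D(A_i,A_3[n-1])=0$ precisely when every eigenvector of $A_i$ with positive eigenvalue lies in $\operatorname{Im}A_3$, i.e. $\operatorname{Im}(A_i)\subseteq\operatorname{Im}(A_3)$; thus equality (vanishing of the product) holds iff this is true for $i=1$ or $i=2$, which is case (iii).

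The inequality itself is therefore easy once the definite case is normalized to $A_3=I$; I expect the main obstacle to be the equality discussion. The delicate point is to track the linear-independence and rank conditions in the polarization expansion carefully enough to show that the three listed cases are \emph{exactly} the equality locus — in particular verifying that they are mutually exhaustive across the possible ranks of $A_3$ and leave no boundary situation (such as $\operatorname{rank}A_3=n-1$, where $A_3^{-1}$ is undefined but the image condition must take over) unaccounted for.
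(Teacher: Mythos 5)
Your proposal is correct, and for the main inequality it follows essentially the same route as the paper: normalize $A_3$ away, compute the relevant mixed discriminants with one or two copies of $I$, and reduce everything to the nonnegativity of a trace of a product of positive semi-definite matrices. Two points of genuine difference are worth recording. First, you conjugate symmetrically, $B_i=A_3^{-1/2}A_iA_3^{-1/2}$, using $D(TA_1T^{t},\dots,TA_nT^{t})=(\det T)^2D(A_1,\dots,A_n)$, whereas the paper multiplies on one side only, setting $X=A_3^{-1}A_1$, $Y=A_3^{-1}A_2$ via $D(BA_1,\dots,BA_n)=\det(B)\,D(A_1,\dots,A_n)$ and then computing $D(X,I[n-1])$ and $D(X,Y,I[n-2])$ entrywise. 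Your version keeps all matrices symmetric psd throughout, so both $\operatorname{tr}(B_1B_2)\ge 0$ and the equality deduction $\operatorname{tr}(B_1B_2)=0\Rightarrow B_1B_2=0\Rightarrow A_1A_3^{-1}A_2=0$ come out immediately, while the paper must separately observe that $A_3^{-1}A_1A_3^{-1}$ is symmetric psd; the two computations are of course equivalent since $\operatorname{tr}(B_1B_2)=\operatorname{tr}(A_3^{-1}A_1A_3^{-1}A_2)$. Second, and more substantially, for singular $A_3$ the paper only notes that the inequality is trivial (the right-hand side vanishes and the left-hand side is nonnegative) and then cites Panov for the equality characterization in cases (ii) and (iii). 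You instead derive that characterization directly from the rank-one expansion $D(v_1v_1^{t},\dots,v_nv_n^{t})=\frac{1}{n!}\det[v_1,\dots,v_n]^2$ together with multilinearity: the linear-independence analysis you sketch (no surviving term when $\operatorname{rank}A_3\le n-2$; when $\operatorname{rank}A_3=n-1$ a term survives iff some positive-eigenvalue eigenvector of $A_i$ leaves $\operatorname{Im}A_3$, i.e. iff $\operatorname{Im}(A_i)\not\subseteq\operatorname{Im}(A_3)$) is correct and makes the equality discussion self-contained where the paper relies on an external reference. The three cases are indexed by the rank of $A_3$ and are therefore exhaustive, so no boundary situation is left over.
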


Here, and in the following, the abbreviation $A[i]$ stands for $i$ copies of the object $A$.
The analogue of inequality~$(\ref{Ineq_Disc})$ for mixed volumes of convex bodies is
\begin{equation}  \label{MV-general-(false)-statement} V(K,A[n-1])V(T,A[n-1])\ge \frac{n-1}{n}
V(K,T,A[n-2])V(A[n]). \end{equation}
Note that if two of the bodies coincide, this inequality
holds even without the $\frac{n-1}{n}$ factor due to  Alexandrov--Fenchel
inequality for $K=T$, or
trivially for $A=K$ or $A=T$.
Inequality~$(\ref{MV-general-(false)-statement}$)
fails in general for $n\ge 3$ (see, e.g., Subsection~\ref{section-counter-example} below). 
However, in the case where $A=B_2^n$ is the Euclidean unit ball and $T$ is a zonoid, not only does inequality~$(\ref{MV-general-(false)-statement})$ hold, but actually 
a slightly stronger inequality is valid. Namely, 
\begin{thm} \label{Thm-about-mixed-volumes}
For every convex body $K \subset {\mathbb R}^n$ and every zonoid $Z \subset {\mathbb R}^n$
\begin{equation}\label{Ineq_Vol-A=B}
V(K,B_2^n[n-1])  V(Z,B_2^n[n-1])\ge \frac{n-1}{n}  {\frac  {\kappa^2_{n-1}} {\kappa_n  \kappa_{n-2}} } V(K,Z,B_2^n[n-2])  V(B_2^n[n]),
\end{equation}
where $\kappa_n$ stands for the volume of the $n$-dimensional Euclidean unit ball.
Moreover, equality holds if and only if $K$ and $Z$ lie in orthogonal (affine) subspaces of ${\mathbb R}^n$. 
\end{thm}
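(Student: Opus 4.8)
The plan is to exploit the fact that both sides of $(\ref{Ineq_Vol-A=B})$ depend \emph{linearly} on $Z$ with respect to Minkowski addition, and to combine this with the integral representation of zonoids in order to reduce the general statement to the case of a single segment. Indeed, the mixed volumes $V(Z,B_2^n[n-1])$ and $V(K,Z,B_2^n[n-2])$ are each additive in the slot occupied by $Z$, whereas the factors $V(K,B_2^n[n-1])$ and $V(B_2^n[n])=\kappa_n$ do not involve $Z$ at all; hence the difference of the two sides of $(\ref{Ineq_Vol-A=B})$ is an additive, translation-invariant, and (by continuity of mixed volumes) continuous functional of $Z$. Writing a zonoid through its generating measure, $h_Z(\cdot)=\int_{S^{n-1}}|\langle\cdot,v\rangle|\,d\mu(v)$, realises $Z$ as a Minkowski integral of segments, so it suffices to prove $(\ref{Ineq_Vol-A=B})$ when $Z=[0,u]$ is a single segment.

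For a segment $S=[0,u]$ I would use the standard identity $V(S,L_2,\dots,L_n)=\frac{|u|}{n}\,V_{u^\perp}(P_{u^\perp}L_2,\dots,P_{u^\perp}L_n)$, where $P_{u^\perp}$ is orthogonal projection onto the hyperplane $u^\perp$ and $V_{u^\perp}$ is the $(n-1)$-dimensional mixed volume there. This yields $V(Z,B_2^n[n-1])=\frac{|u|}{n}\kappa_{n-1}$ and $V(K,Z,B_2^n[n-2])=\frac{|u|}{n}V_{u^\perp}(P_{u^\perp}K,B_2^{n-1}[n-2])$. Substituting into $(\ref{Ineq_Vol-A=B})$, cancelling the common factors $\frac{|u|}{n}$, $\kappa_n$, and $\kappa_{n-1}$, and then expressing the two first mixed volumes through support functions via $V(K,B_2^n[n-1])=\frac1n\int_{S^{n-1}}h_K\,d\theta$ and $V_{u^\perp}(P_{u^\perp}K,B_2^{n-1}[n-2])=\frac1{n-1}\int_{S^{n-2}}h_K\,d\phi$ (using $h_{P_{u^\perp}K}=h_K$ on $u^\perp$), the whole inequality collapses to
\begin{equation*}
\int_{S^{n-1}} h_K(\theta)\,d\theta \;\ge\; \frac{\kappa_{n-1}}{\kappa_{n-2}}\int_{S^{n-2}} h_K(\phi)\,d\phi, \tag{$\ast$}
\end{equation*}
where $S^{n-2}=S^{n-1}\cap u^\perp$ is the equator orthogonal to $u$.

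The heart of the matter is $(\ast)$, which I would prove by slicing the sphere in the $u=e_n$ direction. Parametrising $\theta=(\sqrt{1-t^2}\,\omega,t)$ with $\omega\in S^{n-2}$ and $t\in[-1,1]$, the surface measure factors as $d\theta=(1-t^2)^{(n-3)/2}\,d\omega\,dt$. Averaging over the two antipodal latitudes and using convexity of $h_K$ gives $\tfrac12\big(h_K(\sqrt{1-t^2}\omega,t)+h_K(\sqrt{1-t^2}\omega,-t)\big)\ge h_K(\sqrt{1-t^2}\omega,0)=\sqrt{1-t^2}\,h_K(\omega,0)$, the last step by homogeneity. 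Integrating against the weight $(1-t^2)^{(n-3)/2}$ over $\omega$ and $t$, the left-hand side reassembles to $\int_{S^{n-1}}h_K$, while the right-hand side becomes $\big(\int_{-1}^1(1-t^2)^{(n-2)/2}\,dt\big)\int_{S^{n-2}}h_K(\omega,0)\,d\omega$. The Wallis integral evaluates to exactly $\int_{-1}^1(1-t^2)^{(n-2)/2}\,dt=\sqrt\pi\,\Gamma(n/2)/\Gamma((n+1)/2)=\kappa_{n-1}/\kappa_{n-2}$, which is precisely the constant in $(\ast)$; this computation is what explains the appearance of $\kappa_{n-1}^2/(\kappa_n\kappa_{n-2})$ in the theorem.

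For the equality case, equality in $(\ast)$ forces equality in the convexity step for a.e.\ $(\omega,t)$, so $t\mapsto h_K(\omega',t)$ is affine for each fixed $\omega'\in u^\perp$; together with homogeneity this means $K$ lies, up to translation, inside $u^\perp$. Returning to a general zonoid, the additive functional vanishes iff it vanishes on every segment in the support of $\mu$, i.e.\ iff $K$ is contained in an affine hyperplane orthogonal to each $u\in\mathrm{supp}(\mu)$; since $Z$ spans $L:=\mathrm{span}\,\mathrm{supp}(\mu)$, this occurs exactly when $K\subset L^\perp$ (after translation) while $Z\subset L$, that is, when $K$ and $Z$ lie in orthogonal affine subspaces. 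I expect the main obstacle to be not the inequality $(\ast)$ itself, whose proof is the short convexity-and-slicing argument above, but rather the careful bookkeeping of equality for a general non-polytopal zonoid, where one must pass from the per-segment equality condition back to a statement about the whole body via the generating measure $\mu$.
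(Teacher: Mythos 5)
Your proposal is correct and follows essentially the same route as the paper: reduction to a single segment via additivity and the generating measure, the projection formula for mixed volumes involving a segment, and the key convexity inequality $\tfrac12\bigl(h_K(\theta)+h_K(\Pi_u\theta)\bigr)\ge h_K(P_{u^\perp}\theta)$, which the paper phrases as the Minkowski-symmetrization inclusion $\tfrac12(K+\Pi_uK)\supseteq K|_u$. The only cosmetic difference is that you identify the constant $\kappa_{n-1}/\kappa_{n-2}$ by an explicit slicing/Wallis-integral computation, whereas the paper quotes the dimension-reduction formula for mixed volumes of lower-dimensional bodies; both arguments yield the same equality characterization.
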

When both $K$ and $Z$ are zonoids, inequality~$(\ref{Ineq_Vol-A=B})$  was proved by  Hug and  Schneider in~\cite{HS},  (cf.~\cite{BW}), and was conjectured  to hold for  arbitrary convex bodies $K$ and $Z$.
Note that the constant ${\frac {\kappa^2_{n-1}}  {\kappa_n   \kappa_{n-2}} }$ in~$(\ref{Ineq_Vol-A=B})$
is strictly greater than one, and approaches one as $n$ tends to infinity. More precisely $1<{\frac {\kappa^2_{n-1}}  {\kappa_n  \kappa_{n-2}} } <1+\frac{1}{n-1}$. 

It turns out that  inequality~$(\ref{MV-general-(false)-statement})$  fails  for some triples $(K,T,A)$, even in the case where $K=B_2^n$ is the Euclidean unit ball and $T$ is an interval.
This case is equivalent to an inequality which was conjectured by  Giannopoulos, Hartzoulaki, and Paouris in~\cite{GHP},  and then disproved by Fradelizi, Giannopoulos, and Meyer in~\cite{FGM}, where  also a positive result was proven which gives a special case of~$(\ref{MV-general-(false)-statement})$ with different constants. 
In Subsection~\ref{section-counter-example}  we give yet another example of the failure of~$(\ref{MV-general-(false)-statement})$ when $K=B_2^n$, $T$ is an interval, and   
$A$ is a certain truncated box. Any case where~$(\ref{MV-general-(false)-statement})$ fails with $K=B_2^n$  gives a negative  answer to  the question of  the monotonicity 
of a certain geometric analogue of the Fisher information functional on the class of convex domains which was introduced in~\cite{DCT}. 
More precisely,  denote by $\K^n$ the class of 
compact convex sets in ${\mathbb R}^n$, and for $K \in \K^n$ set $I(K)=|K|/|\partial K|$, where $|K|$ stands for the volume of $K$ and $|\partial K|$ for its surface area.

The functional $I$, which can be considered as a dual  analogue of the Fisher information,  
 was introduced by Dembo, Cover, and Thomas in~\cite{DCT}. In the same paper it was asked whether $I$ satisfies a Brunn--Minkowski type inequality i.e.,  whether for any $K_1,K_2 \in \K^n$ one has $I(K_1+K_2) \geq I(K_1) + I(K_2)$, or at least whether  $I$ is monotone with respect to Minkowski addition, namely, satisfies $I(K_1+K_2) \geq I(K_1)$ for every 
$K_1,K_2 \in {\cal K}^n$.  
In~\cite{DCT} it was verified that $I$ is  monotone with respect to the addition of a Euclidean ball. This is a simple consequence of the Alexandrov--Fenchel inequality. It was also noted that without convexity the above mentioned Brunn--Minkowski type inequality cannot hold. 
In~\cite{FGM} it was shown that even for convex bodies, a counterexample to this 
 inequality exists.  In fact, the example given in~\cite{FGM} is also a counterexample for the monotonicity question above, although this was not pointed out explicitly in~\cite{FGM}.  

Our next observation regarding mixed volumes is the equivalence of the monotonicity property of $I$ with a certain inequality for mixed volumes. More precisely, 

\begin{prop}\label{Prop_Equiv-To-Info-Monoton}
Let $T\in\K^n$. The following two inequalities are equivalent:
\begin{itemize}
\item[(i)] $\forall A\in\K^n:\, V(B_2^n,A[n-1])  V(T,A[n-1])\ge \frac{n-1}{n} \, V(B_2^n,T,A[n-2])  V(A[n])$;
\item[(ii)]$\forall A\in\K^n:\, I(A+T)\ge I(A)$.
\end{itemize}
\end{prop}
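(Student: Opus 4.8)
The plan is to compute the derivative of $I(A+tT)$ at $t=0$ and show that the claimed inequality (i) is exactly the condition ensuring this derivative is nonnegative, after which a scaling/homogeneity argument upgrades the infinitesimal statement to the global monotonicity in (ii). First I would write out the two quantities appearing in $I$. For the volume, Minkowski's theorem gives the polynomial expansion $|A+tT| = \sum_{k=0}^n \binom{n}{k} t^k V(T[k],A[n-k])$, so that $\frac{d}{dt}\big|_{t=0}|A+tT| = n\,V(T,A[n-1])$. For the surface area, I would use $|\partial K| = n\,V(K,B_2^n[n-1])$; applying this to $K=A+tT$ and again expanding by multilinearity of mixed volumes, $|\partial(A+tT)| = n\sum_{k=0}^{n-1}\binom{n-1}{k} t^k V(T[k],A[n-1-k],B_2^n)$, whence $\frac{d}{dt}\big|_{t=0}|\partial(A+tT)| = n(n-1)\,V(T,A[n-2],B_2^n)$.

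Next I would differentiate the quotient $I(A+tT) = |A+tT|/|\partial(A+tT)|$ using the quotient rule at $t=0$. Writing $|A|=V(A[n])$ and $|\partial A| = n\,V(A[n-1],B_2^n)$, the sign of $\frac{d}{dt}\big|_{t=0} I(A+tT)$ is the sign of
\begin{equation}
\frac{d}{dt}\Big|_{t=0}|A+tT|\cdot|\partial A| \;-\; |A|\cdot \frac{d}{dt}\Big|_{t=0}|\partial(A+tT)|.
\end{equation}
Substituting the two derivatives computed above, this equals
\begin{equation}
n\,V(T,A[n-1])\cdot n\,V(A[n-1],B_2^n)\;-\;V(A[n])\cdot n(n-1)\,V(T,A[n-2],B_2^n),
\end{equation}
and dividing through by the positive factor $n^2$ shows that nonnegativity of the derivative is precisely the inequality
\begin{equation}
V(B_2^n,A[n-1])\,V(T,A[n-1])\ge \frac{n-1}{n}\,V(B_2^n,T,A[n-2])\,V(A[n]),
\end{equation}
which is exactly (i) with the body $A$. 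This establishes that (i) for all $A$ is equivalent to $\frac{d}{dt}\big|_{t=0}I(A+tT)\ge 0$ for all $A$.

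The remaining step, and the one I expect to be the main obstacle, is passing between the infinitesimal derivative inequality and the global monotonicity $I(A+T)\ge I(A)$. For the direction (ii)$\Rightarrow$(i), I would simply note that if $I(A+sT)\ge I(A)$ holds for every $A$ and every scaling, then replacing $T$ by $sT$ (which is again a convex body) and letting $s\to 0^+$ recovers the derivative at $0$ being nonnegative. For the direction (i)$\Rightarrow$(ii), the key observation is that (i) holds for \emph{all} $A\in\K^n$, so I may apply it with $A$ replaced by $A+tT$: this shows $\frac{d}{dt}I(A+tT)\ge 0$ for every $t\ge 0$, not just at $t=0$, since $A+tT\in\K^n$ and the derivative of $t\mapsto I((A+tT)+sT)$ at $s=0$ is governed by (i) at the body $A+tT$. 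Integrating the nonnegative derivative from $t=0$ to $t=1$ then yields $I(A+T)\ge I(A)$. The one point requiring care is the differentiability of $t\mapsto I(A+tT)$ for all $t$, which follows because $t\mapsto |A+tT|$ and $t\mapsto|\partial(A+tT)|$ are polynomials in $t$ with the denominator strictly positive, so the quotient is smooth on $[0,\infty)$ and the integration is justified.
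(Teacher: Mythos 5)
Your proposal is correct and follows essentially the same route as the paper: both compute the polynomial expansions of $|A+tT|$ and $|\partial(A+tT)|$, identify the sign of $\tfrac{d}{dt}I(A+tT)\big|_{t=0}$ with inequality (i), and pass from this derivative condition (applied to every body, in particular to $A+tT$) to global monotonicity. The only points to tidy are the typo $|\partial K|=nV(K,B_2^n[n-1])$ (it should be $nV(K[n-1],B_2^n)$, which is what your subsequent expansion actually uses) and, in the direction (ii)$\Rightarrow$(i), the justification of $I(A+sT)\ge I(A)$ for all $s>0$: since $T$ is fixed in the proposition you cannot literally ``replace $T$ by $sT$''; instead apply (ii) to $A/s$ and use the degree-one homogeneity of $I$, exactly as the paper does with its rescaling $\tilde A=A/\delta$.
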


In Section~\ref{Sec_Vol-n=2} we shall prove that in dimension $2$ the information functional $I$ is monotone with respect to Minkowski addition. In fact, inequality \eqref{MV-general-(false)-statement} holds for all $K,T$ and $A$. As noted above, in any other dimension $n \geq 3$ 
both inequalities in Proposition~\ref{Prop_Equiv-To-Info-Monoton} fail in general. 
In Subsection~\ref{section-counter-example} we give for any $n\ge 3 $ an explicit example of a pair of convex bodies $T,A \in {\cal K}^n$ for which the two inequalities in Proposition~\ref{Prop_Equiv-To-Info-Monoton} fail to hold.
It remains an interesting question to determine for which convex bodies
$T$ the inequality in Proposition \ref{Prop_Equiv-To-Info-Monoton} does hold, and monotonicity is satisfied (for example, the ball $B_2^n$ is such a body). 

The rest of the paper is organized as follows:  in Sections~\ref{Sec_Disc} and~\ref{Sec-MV-int-and-theorem}  we prove Theorems~\ref{Thm-about-mixed-discriminants} and~\ref{Thm-about-mixed-volumes},
respectively.  In Section~\ref{Section-relation-with-entropy} we discuss the relation between the information functional $I$ and inequality~$(\ref{MV-general-(false)-statement})$, and prove Proposition~\ref{Prop_Equiv-To-Info-Monoton}.
 Finally, in Section~\ref{Sec_Vol-n=2} we prove inequality~$(\ref{MV-general-(false)-statement})$ in the two-dimensional case.

\noindent{\bf Notations:} Throughout the text we shall use the following notations:  
By a convex body we shall mean a compact convex set with non-empty
interior. The class of convex bodies in ${\mathbb R}^n$
is denoted by $\K^n$. Given $K \in \K^n$, we denote by $h_K : {\mathbb R}^n \to {\mathbb R}$  its support function, given by 
$ h_K(u) = \sup \{ \langle x, u \rangle \ ; \  x \in  K \}$.
We set $\sigma$ to be the normalized Haar measure on the sphere $S^{n-1}\subset {\mathbb R}^n$,
and $\lambda_
{n}$ the standard $n$-dimensional Lebesgue measure.  The volume of the $n$-dimensional Euclidean unit ball is denoted by $\kappa_n$.
Finally, we denote $M^*(K):= \int_{S^{n-1}}
h_{K} \, d\sigma$.

\noindent{\bf Acknowledgments}: We thank Prof. R. Schneider for his comments on the written text. 
The first and second named authors were partially supported by ISF grant No. 247/11.  The third named author was partially supported by a Reintegration Grant SSGHD-268274 within the 7th European community framework programme, and by   ISF grant No. 1057/10. 

\section{Mixed Discriminants}\label{Sec_Disc}

Mixed discriminants were introduced by A. D. Aleksandrov as a tool to study mixed volumes
of convex sets (see e.g., \textsection 25.4 in~\cite{BZ} and the references therein).  
They  are the coefficients in the polynomial expansion
of the determinant of a sum of matrices.
More precisely, let $A_1,\ldots, A_m$ be symmetric real $n \times n$ matrices .The determinant of  the sum $\sum_{i=1}^m \lambda_i A_i$ is a homogeneous polynomial of degree $n$ in $\lambda_1,\ldots,\lambda_m$, and can be written as
\begin{equation}\label{Eq_Disc-Def}
\det \left(\sum_{i=1}^m \lambda_i A_i\right) =
\sum_{i_1,\dots,i_n=1}^m \lambda_{i_1}\cdots\lambda_{i_n} D(A_{i_1},\dots,A_{i_n})
\end{equation}
(see~\cite{BZ},  or~\textsection 2.5 in~\cite{Sch}). 
The quantity $D(A_1,\ldots,A_n)$ is called the mixed discriminant of $A_1,\ldots,A_n$.

In the following lemma we gather some basic well known facts
regarding mixed discriminants (see e.g.~\cite{Bap}). Here, $A^i$
stands for the  $i$-th column of the matrix $A$, the notation
$A \geq 0$ means that $A$ is symmetric and positive semi-definite,
and $\Pi_n$ stands for the permutation group of $n$ elements.
\begin{lem}  \label{basic-prop-of-md}
Let $A_1,\ldots, A_n$ be symmetric real $n \times n$ matrices. 
\begin{itemize}
\item[(i)]  If $A_i \geq 0$ for all $i$, then $D(A_1,\ldots, A_n) \geq 0$;
\item[(ii)]  $D(BA_1,\dots,BA_n)=\det(B)D(A_1,\dots,A_n)$, for any  $n\times n$  matrix $B$;
\item[(iii)] $D(A_1,\dots,A_n)=\frac{1}{n!}\sum_{\sigma\in \Pi_n}
\det(A_{\sigma(1)}^1,\dots,A_{\sigma(n)}^n).$
\end{itemize}
\end{lem}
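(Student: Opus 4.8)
The plan is to establish the three items in the order (iii), (ii), (i), since the explicit permutation formula in (iii) is the computational engine from which the other two facts follow with little effort. First I would derive (iii) directly from the defining expansion~\eqref{Eq_Disc-Def} (taken with $m=n$). The $j$-th column of $\sum_i \lambda_i A_i$ is $\sum_i \lambda_i A_i^j$, so multilinearity of the determinant in its columns gives
\begin{equation*}
\det\Big(\sum_{i=1}^n \lambda_i A_i\Big)=\sum_{i_1,\dots,i_n=1}^n \lambda_{i_1}\cdots\lambda_{i_n}\,\det\big(A_{i_1}^1,\dots,A_{i_n}^n\big).
\end{equation*}
Comparing this with~\eqref{Eq_Disc-Def} and extracting the coefficient of the squarefree monomial $\lambda_1\cdots\lambda_n$ yields, on one side, $\sum_{\sigma\in\Pi_n}\det(A_{\sigma(1)}^1,\dots,A_{\sigma(n)}^n)$ (the contributions coming from tuples $(i_1,\dots,i_n)$ that are permutations), and, on the other side, $n!\,D(A_1,\dots,A_n)$, the factor $n!$ arising from summing the $n!$ orderings of the arguments together with the fact that $D$ is, by the convention built into~\eqref{Eq_Disc-Def}, a symmetric function of $A_1,\dots,A_n$. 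This proves (iii). I would also record two consequences: since a fixed $A_i$ enters each determinant on the right-hand side through exactly one column (namely column $\sigma^{-1}(i)$), formula (iii) exhibits $D$ as a multilinear symmetric function of its arguments, which I use below; and the right-hand side makes sense for arbitrary, not necessarily symmetric, matrices.

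Next, (ii) is immediate from (iii). Since $(BA_i)^j=B\,A_i^j$, each determinant appearing in (iii) for the tuple $(BA_1,\dots,BA_n)$ equals $\det(B\,A_{\sigma(1)}^1,\dots,B\,A_{\sigma(n)}^n)=\det(B)\det(A_{\sigma(1)}^1,\dots,A_{\sigma(n)}^n)$ by multiplicativity of the determinant. Summing over $\sigma\in\Pi_n$ and dividing by $n!$ gives $D(BA_1,\dots,BA_n)=\det(B)\,D(A_1,\dots,A_n)$; this also explains why the left-hand side is meaningful even when the matrices $BA_i$ fail to be symmetric.

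Finally, for (i) I would reduce to the rank-one case using multilinearity. If each $A_i\ge 0$, the spectral theorem lets me write $A_i=\sum_k v_{i,k}v_{i,k}^{T}$ as a finite sum of rank-one positive semi-definite matrices, and multilinearity expands $D(A_1,\dots,A_n)$ as a sum of terms $D(v_{1,k_1}v_{1,k_1}^{T},\dots,v_{n,k_n}v_{n,k_n}^{T})$. It therefore suffices to verify nonnegativity for a single rank-one tuple. Writing $V=[v_1,\dots,v_n]$ and $\Lambda=\mathrm{diag}(\lambda_1,\dots,\lambda_n)$, one has $\sum_i\lambda_i v_iv_i^{T}=V\Lambda V^{T}$, hence $\det(\sum_i\lambda_i v_iv_i^{T})=\det(V)^2\,\lambda_1\cdots\lambda_n$; reading off the coefficient of $\lambda_1\cdots\lambda_n$ exactly as in (iii) gives the identity $D(v_1v_1^{T},\dots,v_nv_n^{T})=\tfrac{1}{n!}\det(V)^2\ge 0$, and summing the nonnegative terms completes (i).

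The only genuine subtlety — the step I would treat most carefully — is the coefficient bookkeeping in (iii): correctly identifying the coefficient of $\lambda_1\cdots\lambda_n$ on both sides and, in particular, tracking the factor $1/n!$, which rests squarely on the symmetry convention for $D$ in~\eqref{Eq_Disc-Def}. Once (iii) is in hand the remaining items are routine, the only real computation being the rank-one identity $D(v_1v_1^{T},\dots,v_nv_n^{T})=\tfrac{1}{n!}\det(V)^2$ used in (i).
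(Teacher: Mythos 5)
Your proof is correct. Note, however, that the paper does not prove this lemma at all: it is stated as a collection of well-known facts with a citation to Bapat's paper on mixed discriminants, so there is no in-paper argument to compare against. Your derivation is the standard self-contained one, and the logical order (iii) $\Rightarrow$ (ii), (iii) $\Rightarrow$ (i) is the right way to organize it: the coefficient extraction for (iii) is handled correctly (the squarefree monomial $\lambda_1\cdots\lambda_n$ picks out exactly the permutation tuples on the determinant side and $n!\,D(A_1,\dots,A_n)$ on the other side, using the symmetry convention for $D$); (ii) then follows from $(BA_i)^j=BA_i^j$ and multiplicativity of $\det$; and the rank-one reduction for (i), with the identity $D(v_1v_1^{T},\dots,v_nv_n^{T})=\tfrac{1}{n!}\det(V)^2$ coming from $\det(V\Lambda V^{T})=\det(V)^2\lambda_1\cdots\lambda_n$, is exactly the classical argument. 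You were right to flag the two points that need explicit justification — that (iii) exhibits $D$ as multilinear (each $A_i$ enters each summand through a single column) and that the formula extends $D$ beyond symmetric matrices, which is what makes the left-hand side of (ii) meaningful. The only stylistic remark is that the paper's own proof of Theorem~\ref{Thm-about-mixed-discriminants} uses precisely properties (ii) and (iii) as black boxes, so a reader of your write-up gets a fully self-contained treatment where the paper relies on the literature.
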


Note that if $A_i=A$ for all $i$, then $D(A_1,\ldots, A_n) = \det(A)$.
We are now in a position to prove Theorem~\ref{Thm-about-mixed-discriminants}.
\begin{proof}[{\bf Proof of Theorem~\ref{Thm-about-mixed-discriminants}}]
From property $(i)$ of Lemma~\ref{basic-prop-of-md} it follows  that  inequality~$(\ref{Ineq_Disc})$ holds trivially when  $\det(A_3)=0$. Thus, we can assume without loss
of generality that $A_3$ is invertible. Hence, using property $(ii)$ of Lemma~\ref{basic-prop-of-md},  we conclude that in order to prove inequality~$(\ref{Ineq_Disc})$
it suffices to show that:
\begin{equation} \label{MD-ineq-reduced} D(X,I[n-1])D(Y,I[n-1])\ge \frac{n-1}{n}D(X,Y,I[n-2]), \end{equation}
where $X=A_3^{-1}A_1$, $Y=A_3^{-1}A_2$, and $I$ is the $n\times n$ identity matrix. By property $(iii)$ of Lemma~\ref{basic-prop-of-md} we have:
$$D(X,I[n-1])=\frac{1}{n}\sum_{i=1}^n \det(e_1,\dots,e_{i-1}, X^i,e_{i+1}, \dots,e_n)
=\frac{1}{n}\sum_{i=1}^n x_{ii}=\frac{{\rm tr}(X)}{n},$$
where  $\{e_i\}_{i=1}^n$ stands for the $i$-th  column of the identity matrix $I$, and 
\begin{eqnarray}
 D(X,Y,I[n-2])   =
\frac{1}{n(n-1)}\sum_{i\neq j}\det(Z(i,j)). \nonumber
\end{eqnarray}
where $Z(i,j)$ denotes the identity matrix with the $i^{th}$ column replaced by $X^{i}$ and the $j^{th}$ column replaced by $Y^j$. 
Separating into two sums we get 
\begin{eqnarray}
 D(X,Y,I[n-2])   & = &
 \frac{1}{n(n-1)} 
\sum_{i<j}\det(e_1,\dots,X^i,\dots,Y^j,\dots,e_n)+  \nonumber\\
&&  \frac{1}{n(n-1)} \sum_{j<i}\det(e_1,\dots,Y^j,\dots,X^i,\dots,e_n)  
\\
& = &  \frac{1}{n(n-1)}\sum_{i\neq j}(x_{ii}y_{jj}-x_{ji}y_{ij})=
\frac{1}{n(n-1)}\sum_{i,j=1}^n(x_{ii}y_{jj}-x_{ji}y_{ij}). \nonumber
\end{eqnarray}
Combining these relations we conclude that
\begin{eqnarray}
\lefteqn{D(X,I[n-1])D(Y,I[n-1])-\frac{n-1}{n} D(X,Y,I[n-2])}\nonumber\\
& = &\left(\frac{1}{n}\sum_{i=1}^n x_{ii}\right)
\left(\frac{1}{n}\sum_{j=1}^n y_{jj}\right)-\frac{n-1}{n} \cdot \frac{1}{n(n-1)}
\sum_{i,j=1}^n(x_{ii}y_{jj}-x_{ji}y_{ij}) \nonumber
\\
& = & \frac{1}{n^2}\sum_{i,j=1}^n(x_{ii}y_{jj}-x_{ii}y_{jj}+x_{ji}y_{ij}) =\frac{1}{n^2}\sum_{i,j=1}^n x_{ji}y_{ij}=\frac{{\rm tr}(XY)}{n^2}. \nonumber
\end{eqnarray}
Note that $\operatorname{tr}(XY)=\operatorname{tr}(A_3^{-1}A_1A_3^{-1}A_2)$. Moreover, it is not hard to check that the matrix $A_3^{-1}A_1A_3^{-1}$ is also symmetric and positive semi-definite. 
Inequality~$(\ref{MD-ineq-reduced})$ now immediately follows since the trace of the product of two symmetric positive semi-definite matrices is always  
non-negative\footnote{Indeed, for any two symmetric positive semi-definite matrices $A$ and $B$  one has $\operatorname{tr}(AB) =\operatorname{tr}(\sqrt{A}\sqrt{A}\sqrt{B}\sqrt{B})
=\operatorname{tr}(\sqrt{B}\sqrt{A}\sqrt{A}\sqrt{B}) =\operatorname{tr}((\sqrt{A}\sqrt{B})^*\sqrt{A}\sqrt{B}) $, which is a sum of squares.}.
Moreover, under the assumption that $A_3$ is invertible, equality in~$(\ref{MD-ineq-reduced})$ holds if and only if  $\operatorname{tr}(A_3^{-1}A_1A_3^{-1}A_2)=0$, or equivalently (as it is the product of two positive definite matrices), $A_1A_3^{-1}A_2=0$. Moreover, it follows from~\cite{Pan}  that for singular $A_3$ equality in~$(\ref{Ineq_Disc})$ holds if and only if either 
$A_3$ is of rank at most $n-2$, or $A_3$ is of rank $n-1$ and either $A_1$ or $A_2$ satisfies ${\rm Im}(A_i) \subset {\rm Im}(A_3)$.
This completes the proof of Theorem~\ref{Thm-about-mixed-discriminants}.
\end{proof}



\section{Mixed Volumes} \label{Sec-MV-int-and-theorem}

Arising from the classical works of Minkowski, Aleksandrov, Hadwiger, and
many others, mixed volumes have been studied in a variety of contexts.
In addition to having vast applications to convex geometry, mixed volumes
provide geometric techniques to study sparse systems of polynomial
equations - and thus serve as a bridge between algebraic and convex
geometry, appear as intersection numbers in tropical geometry, and can
be used as a powerful tool in combinatorics and computational geometry.
For a detailed exposition and  further information on the properties of
mixed volumes we refer the reader to Chapter 5 of~\cite{Sch}.

A classical result due to Minkowski states that the volume of a
linear combination $\sum_{i=1}^m \lambda_i K_i$ of convex bodies
$K_i$ is a homogeneous polynomial of degree $n$  in  $\lambda_i \ge 0$, 
where $A+B$ stands for Minkowski addition,  $A+B=\{a+b: a\in A,b\in B\}$.
Mixed volumes are the coefficients in this polynomial expansion.
More precisely
\begin{equation}\label{Eq_Vol-Def}
\Vol \left(\sum_{i=1}^m \lambda_i K_i\right) = 
\sum_{i_1,\dots,i_n=1}^m \lambda_{i_1}\cdots\lambda_{i_n} V(K_{i_1},\dots,K_{i_n}),
\end{equation}
where $K_i\subset\R^n$ are compact convex sets and $\lambda_i \ge 0$. 
The coefficient $V(K_{i_1},\ldots,K_{i_n})$ of the monomial $\lambda_{i_1} \cdots \lambda_{i_n}$ is called the mixed volume of $K_{i_1}, \ldots,K_{i_n}$,
and it depends only on $K_{i_1}, \ldots, K_{i_n}$ and not on any of the other bodies.
One may assume that the coefficients are  symmetric with respect to permutations of
the bodies. Mixed volumes are known to be non-negative, and are clearly translation
invariant. Moreover, they are monotone with respect to set inclusion, additive in
each argument with respect to Minkowski addition, continuous  with respect to the
Hausdorff topology, and positively homogeneous in each argument (see e.g.
Section \textsection 5.1 of~\cite{Sch}, and~\cite{Egg}, Chapter 5). 
%
We now turn to the proof of Theorem~\ref{Thm-about-mixed-volumes}.

\begin{proof}[{\bf Proof of Theorem~\ref{Thm-about-mixed-volumes}}]
Since mixed volumes are continuous with respect to the Hausdorff topology on $\K^n$, 
in order to prove inequality~$(\ref{Ineq_Vol-A=B})$ it  suffices to assume that 
 $Z$ is a zonotope, i.e., a Minkowski sum of intervals. Moreover, using the additivity, translation invariance, and positive  homogeneity of mixed volumes, 
we may further assume  that $Z$ is the interval $Z=[0,u]$, for some  $u\in S^{n-1}$. 

%

Let $u \in S^{n-1}$. We denote by $\sigma'$ the normalized Haar measure on
the sphere $S^{n-2}$ which we identify with $S^{n-1}\cap u^\perp$,
by $\nu$ the $(n-1)$-dimensional mixed volume functional in
${\mathbb R}^{n-1}$, and by $K|_u$ the orthogonal projection of $K$ onto
$u^\perp$. It is well known (see e.g.,  equation (A.43) in~\cite{Gar})
that: $$V([0,u],K_2,\dots,K_n) = \frac{1}{n} \nu(K_2|_u,\dots,K_n|_u).$$
Combining this with the standard integral representation of
quermassintegrals (that is, mixed volumes where only two bodies
are mixed - some body $K$ and the Euclidean ball $B_2^n$), see
(5.1.18) in \textsection 5.3 of~\cite{Sch}, we conclude that
$$
V(K,B^n[n-1])=\kappa_n \int_{S^{n-1}}
h_{K} \, d\sigma,\qquad V(Z,B^n[n-1])=\frac{\kappa_{n-1}}{n},$$
$$
V(K,Z,B^n[n-2])=\frac{1}{n}\nu(K|_u,B^{n-1}[n-2])= 
\frac{\kappa_{n-1}}{n}\int_{S^{n-1}\cap u^\perp} h_{K} \, d\sigma',$$
where in the last equality we have used the fact that $h_K=h_{K\vert_u}$ on
$u^\perp$. With these equalities at our disposal,  inequality~(\ref{Ineq_Vol-A=B})
reduces to showing
\begin{equation} \label{equation-about-int-in-MV-proof} M^*(K):= \int_{S^{n-1}}
h_{K} \, d\sigma \ge C_n
\int_{S^{n-1} \cap u^\perp} h_{K} \, d\sigma',
\end{equation}
where
the constant $C_n$ is the same as in (\ref{Ineq_Vol-A=B}), i.e., $C_n=
{\frac {n-1} {n}} \, {\frac {\kappa_{n-1}^2} {\kappa_n \kappa_{n-2}} }$.

Denote by $\Pi_u$
 the reflection operator with respect to the hyperplane
 $u^{\perp}$, that is, 
 $\Pi_u (x) = x-2u\iprod{x}{u}$.
 Clearly 
$M^*(K)= M^*(\frac{K+\Pi_uK}{2})$. The body $\frac{K+\Pi_uK}{2}$ is called the Minkowski symmetrization of $K$ in direction $u$. It 
is also clear that $\frac{K+\Pi_uK}{2}\supseteq K|_u$. Thus we get that 
  $M^*(K)= M^*(\frac{K+\Pi_uK}{2})\ge M^*(K|_u)$, where  the second inequality
is due to inclusion. Moreover, since  $h_K=h_{K \vert_u}$ on
$u ^\perp$, the right-hand side of~$(\ref{equation-about-int-in-MV-proof})$
is the same for $K$ and for $K|_u$. Therefore, to prove
inequality~$(\ref{equation-about-int-in-MV-proof})$ it suffices to
assume that $K\subset u^\perp$. On the other hand, in the case
where $K\subset u^\perp$, the left and right-hand side of
equation~$(\ref{equation-about-int-in-MV-proof})$ are equal. Indeed,
it is well known (see e.g.~\cite{Gar}, page 404, equation (A.29))
that for a $k$-dimensional convex body in $\RR^n$ with $k<n$ one has
for any $0 \leq i \leq k$ that
\[ {\frac {1}{c_{i,k}}} V(K[i],B^k_2[k-i]) =
  {\frac {1}{c_{i,n}}} V(K[i],B^n_2[n-i]), \ {\rm where} \ c_{i,k} = \frac{\kappa_{k-i}}{\binom{k}{i}},\]
where the mixed volume functional on the left-hand side is $k$-dimensional.  
Thus, we conclude that
\begin{eqnarray*}   \int_
{S^{n-1}}h_K d \sigma  &= & \frac{1}{\kappa_n} V(K ,B_2^n[n-1]) =  \frac{1}{\kappa_n}\frac{c_{1,n}}{c_{1,n-1}} V(K ,B_2^{n-1}[n-2])  \\
&= & \frac{\kappa^2_{n-1}} {\kappa_n \kappa_{n-2}} \frac{n-1}{n}   \int_{S^{n-1}
\cap u^\perp} h_Kd\sigma'. \end{eqnarray*}
Next we  characterize the equality case. For $Z=[0,u]$, equality in
$M^*(K)\ge M^*(K|_u)$, and hence in~$(\ref{Ineq_Vol-A=B})$, holds if
and only if $K \subset x_0 + u^\perp$ for some $x_0 \in \RR^n$.
Using the additivity of inequality~$(\ref{Ineq_Vol-A=B})$ in the
parameter $Z$, we conclude that for a zonotope $Z$ (i.e., a finite
Minkowski sum of line segments), equality in~$(\ref{Ineq_Vol-A=B})$
holds if and only if, up to translations, $K \subset E$ and
$Z\subset E^{\perp}$ for some  linear subspace $E \subset {\mathbb R}^n$.
Finally, for a general zonoid $Z$ we argue as follows.  Let
$K \in \K^n$, and consider the function 
$$I_K(Z):=   V(K,B^n[n-1])  V(Z,B^n[n-1]) -  \frac{n-1}{n}
 {\frac  {\kappa^2_{n-1}} {\kappa_n  \kappa_{n-2}} }  V(K,Z,B^n[n-2])  V(B^n[n]),$$
defined on the class of zonoids in ${\mathbb R}^n$. 
Set $E_K$ to be the subspace  of ${\mathbb R}^n$ parallel to
the minimal  affine space containing $K$. In these notations
the above argument implies that $I_K([-u,u])=0$ only if
$u \in E_K^{\perp}$. Moreover, since any zonoid $Z$ is given
by $Z=\int_{S^{n-1}} [-u,u] d\mu_Z$, where $\mu_Z$ is an even
measure on $S^{n-1}$ (see~\textsection 3.5 in~\cite{Sch}), it
follows from the properties of mixed volume that
$I_K(Z)=\int_{S^{n-1}}I ([-u,u])d\mu_Z$. Hence, using the fact
that $Z \subset E_K^{\perp}$ if and only if $\mu_Z$ is supported
on $S^{n-1} \cap E_K^{\perp}$, we conclude that $I_K(Z)>0$ if and
only if  $Z \not\subset E_K^{\perp}$, and the proof is now complete. 
\end{proof}

%

\begin{rem} {\rm 
Note that if $K=Z$ in Theorem~\ref{Thm-about-mixed-volumes}, even without the assumption that $Z$ is a zonoid (i.e., for any $Z \in \K^n$),
inequality~(\ref{Ineq_Vol-A=B})
holds without the ${\frac {n-1} {n} } \, {\frac  {\kappa_{n-1}^2}  {\kappa_n \kappa_{n-2}} }$ factor, thanks to Minkowski's first
inequality  (see Equation (6.2.3) in~\cite{Sch}). }
\end{rem}

\begin{rem} {\rm 
As mentioned in the introduction, inequality~(\ref{Ineq_Vol-A=B}) is conjectured to hold for arbitrary convex bodies $K,T \in \K^n$ (see~\cite{HS}).
This conjecture can be reformulated in the language of harmonic analysis, or, more precisely, as a conjecture on the rate of decay of the coefficients of a convex function with respect to its spherical harmonic decomposition. 
More precisely, consider the following integral formula for 
$V(K,T,B[n-2])$ (see (5.3.17) in~\cite{Sch}):
$$V(K,T,B[n-2])=\kappa_n\int_{S^{n-1}}h_K(u) \Bigl (h_T(u)+\frac{1}
{n-1}\triangle_S h_T(u) \Bigr ) d\sigma(u),$$
where $\triangle_S$ stands for the spherical Laplace operator on $S^{n-1}$.
Combining this with the fact  that $V(K,B,\dots,B)= \kappa_n\int_{S^{n-1}}h_K d \sigma$ (see (5.3.12) in~\cite{Sch}), 
 we conclude that
for $K,T \in K^n$ inequality~(\ref{Ineq_Vol-A=B}) can be written as:
\begin{equation} \label{eq-form-of-ineq-about-KTB} 
\int_{S^{n-1}}h_K \, d\sigma \int_{S^{n-1}}h_T \, d\sigma \geq C_n \int_{S^{n-1}}h_K(u) \Bigl (h_T(u)+\frac{1}
{n-1}\triangle_S h_T(u) \Bigr ) d\sigma(u), \end{equation}
where $C_n =  {\frac {n-1} {n} } \, {\frac {\kappa_{n-1}^2} {\kappa_{n} \kappa_{n-2}} }$. 
Next, consider the harmonic expansion of the support functions  $h_K$ and $h_T$, 
given by $$h_K = \sum_{m=0}^{\infty} \sum_{l=0}^{N(m,n)}  k_{m,l} Y^l_m, \ {\rm and} \ \ h_T = \sum_{m=0}^{\infty} \sum_{l=0}^{N(m,n)}  t_{m,l}  Y^l_m,$$ where $\{ Y^l_m\}_{l=0}^{N(m,n)}$ stands for a basis for the $m$-eigenspace of the spherical Laplace operator $\triangle_S$ on the space $L_2(S^{n-1})$  with eigenvalue $\triangle_S Y^l_m = -m(m+n-2)Y^l_m$ (see the Appendix of~\cite{Sch} for more details). 
It is well known 
 that $ k_{0,0}=   \int_{S^{n-1}}h_K d\sigma$, and similarly $t_{0,0} =   \int_{S^{n-1}}h_T d\sigma$.
Thus, we conclude that  inequality~(\ref{Ineq_Vol-A=B}) is equivalent to 
\begin{equation} \label{eq-for-spherical-harmonics1}
k_{0,0}t_{0,0} \geq C_n \sum_{m=0}^{\infty}  { \frac {(1-m)(n+m-1)} {n-1} }  \sum_{l=0}^{N(m,n)}  k_{m,l} t_{m,l},
\end{equation}
which after a suitable rearrangement can be written as
\begin{equation} \label{eq-for-spherical-harmonics2}
k_{0,0} t_{0,0} \geq {\frac {C_n} {1-C_n}} \sum_{m=1}^{\infty}   { \frac {(1-m)(n+m-1)} {n-1} } \sum_{l=0}^{N(m,n)}      k_{m,l} t_{m,l}.
\end{equation}
We thus arrive at the following conjecture
\begin{conj}
Let $h_1, h_2: S^{n-1}\to \RR$ be two convex functions (that is, their homogeneous extension to $\RR^n$ is assumed convex). Let their decomposition in the basis of spherical harmonics be given by 
\[ h_1 = \sum_{m=0}^{\infty} \sum_{l=0}^{N(m,n)}  a_{m,l} Y^l_m, \ {\rm and} \ \ h_2 = \sum_{m=0}^{\infty} \sum_{l=0}^{N(m,n)}  b_{m,l}  Y^l_m. \]
Then 
\begin{equation}
a_{0,0} b_{0,0} \geq { D_n} \sum_{m=1}^{\infty}   { \frac {(1-m)(n+m-1)} {n-1} } \sum_{l=0}^{N(m,n)}      a_{m,l} b_{m,l}, 
\end{equation}
where $D_n = \frac{(n-1)\kappa_{n-1}^2}{n\kappa_n\kappa_{n-2} - (n-1)\kappa_{n-1}^2}$. 
\end{conj}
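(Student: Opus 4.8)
The plan is to use the reformulation recorded above to turn the conjecture into a single positivity statement for a bilinear form on the cone of support functions, and then to feed the convexity of $h_1,h_2$ into that form through the radius-of-curvature operator. Introduce the self-adjoint operator
\[ \mathcal{L} = \mathrm{Id} + \tfrac{1}{n-1}\triangle_S, \]
which acts on the $m$-th eigenspace of $\triangle_S$ by the scalar $\tfrac{(1-m)(n+m-1)}{n-1}$; thus $\mathcal{L}$ equals the identity on constants, annihilates the first harmonics, and is strictly negative on every harmonic of order $m\ge 2$. In this language inequality (\ref{eq-for-spherical-harmonics1}) — and hence the conjecture — is precisely the assertion that
\[ \Phi(h_1,h_2) := a_{0,0}\,b_{0,0} - C_n\int_{S^{n-1}} h_1\,(\mathcal{L}h_2)\,d\sigma \ \ge\ 0, \qquad C_n = \tfrac{n-1}{n}\,\tfrac{\kappa_{n-1}^2}{\kappa_n\kappa_{n-2}}\in(0,1), \]
for all support functions $h_1,h_2$; dividing by $1-C_n>0$ returns the stated form with $D_n=C_n/(1-C_n)$.

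The convexity input I would exploit is that, for a support function $h$, the radius-of-curvature operator $R_h := h\,g + \Hess_S h$ (with $g$ the round metric on $S^{n-1}$) is pointwise positive semi-definite, so that $\mathcal{L}h = \tfrac{1}{n-1}\operatorname{tr}(R_h)\ge 0$ pointwise. Consequently $d\nu_2 := (\mathcal{L}h_2)\,d\sigma$ is a non-negative measure; since $\mathcal{L}h_2$ carries no first-harmonic component, $\nu_2$ is centered (its barycenter is the origin) and has total mass $\int \mathcal{L}h_2\,d\sigma = b_{0,0}$. The inequality $\Phi\ge 0$ then reads
\[ \int_{S^{n-1}} h_1\,d\sigma \ \ge\ \frac{C_n}{b_{0,0}}\int_{S^{n-1}} h_1\,d\nu_2, \]
i.e.\ the uniform average of $h_1$ must dominate $C_n$ times its average against the curvature measure of $h_2$. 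The essential point is that $\nu_2$ is \emph{not} an arbitrary centered measure of the given mass but a mixed surface-area measure of a convex body, and it is exactly this restriction that must be shown to prevent $\nu_2$ from concentrating where $h_1$ is large.

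To prove the displayed inequality I would try two complementary routes. First, a parity decomposition $h_i = h_i^{+}+h_i^{-}$ into even and odd parts: the cross terms drop out of $\int h_1\,(\mathcal{L}h_2)\,d\sigma$ (orthogonal harmonic supports) and of $a_{0,0}b_{0,0}$, so $\Phi$ splits into an even and an odd piece. On the even piece, where $h_i^{+}$ is the support function of the centrally symmetric body $\tfrac12(K_i-K_i)$, I would attempt to reduce to the zonoid case of Theorem~\ref{Thm-about-mixed-volumes}, using the integral representation $Z=\int_{S^{n-1}}[-u,u]\,d\mu_Z$ and additivity of mixed volumes as in that proof. Second, and in parallel, a Minkowski-symmetrization flow applied to $T$ rather than to the test body $K$: each step preserves $b_{0,0}$ and should only decrease $\int h_1\,(\mathcal{L}h_2)\,d\sigma$, with the Euclidean ball as the stable endpoint at which $\Phi$ vanishes — mirroring, and hopefully extending, the symmetrization argument used to prove Theorem~\ref{Thm-about-mixed-volumes}. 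Throughout I would use the equality characterization of Theorem~\ref{Thm-about-mixed-volumes} (bodies in orthogonal subspaces) as a guide to the extremal configurations.

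The step I expect to be the main obstacle is precisely the one that separates the true matricial statement of Theorem~\ref{Thm-about-mixed-discriminants} from the \emph{false} mixed-volume statement (\ref{MV-general-(false)-statement}): the left-hand side of $\Phi$ is a \emph{product of two sphere averages}, whereas integrating the pointwise inequality of Theorem~\ref{Thm-about-mixed-discriminants} only ever produces an \emph{integral of a product}, and the two differ by an uncontrolled correlation term. Equivalently, the indefinite form $\sum_{m\ge 2}\tfrac{(1-m)(n+m-1)}{n-1}\sum_l a_{m,l}b_{m,l}$ cannot be estimated harmonic-by-harmonic, and convexity of a general body imposes no sign constraint on its individual coefficients. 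The zonoid argument is silent on the odd harmonics $m=3,5,\dots$, which vanish identically for zonoids but are free in sign for general convex bodies; producing a lower bound for $a_{0,0}b_{0,0}$ in terms of these mutually anti-correlated high-order harmonics, using nothing more than the semi-definiteness of $R_{h_1}$ and $R_{h_2}$, is the crux of the matter and the reason the statement remains only a conjecture.
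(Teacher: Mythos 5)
The statement you are asked to prove is labelled a \emph{Conjecture} in the paper, and the paper offers no proof of it: the surrounding remark only \emph{derives} the statement, via the integral formula $V(K,T,B[n-2])=\kappa_n\int_{S^{n-1}}h_K\bigl(h_T+\tfrac{1}{n-1}\triangle_S h_T\bigr)d\sigma$ and the identity $D_n=C_n/(1-C_n)$, as an equivalent reformulation of the Hug--Schneider conjecture that inequality (\ref{Ineq_Vol-A=B}) holds for arbitrary convex bodies $K,T$ rather than only when one of them is a zonoid. Your proposal correctly reconstructs exactly this equivalence (the eigenvalue computation for $\mathcal{L}=\mathrm{Id}+\tfrac{1}{n-1}\triangle_S$, the bound $C_n\in(0,1)$, the identification of $(\mathcal{L}h_2)\,d\sigma$ with the nonnegative, centered first area measure of total mass $b_{0,0}$ are all sound), but it does not prove the statement, and you say so yourself in the final paragraph. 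Since neither the paper nor your proposal contains a proof, there is nothing to certify here: what you have written is a research programme with an explicitly acknowledged gap at its centre.

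Two of the intermediate reductions you propose are themselves problematic and should not be presented as viable routes without further justification. First, the parity decomposition: $a_{0,0}b_{0,0}$ lives entirely in the even piece, so the odd piece of $\Phi$ would have to be nonnegative \emph{on its own}, i.e.\ $\sum_{m\ge 3\ \mathrm{odd}}\tfrac{(1-m)(n+m-1)}{n-1}\sum_l a_{m,l}b_{m,l}\le 0$ with no compensating positive term; moreover the odd parts $h_i^-$ are not support functions, so the convexity hypothesis gives no handle on them, and even the even parts only yield centrally symmetric bodies, which for $n\ge 3$ are generally not zonoids, so Theorem~\ref{Thm-about-mixed-volumes} does not apply to them. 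Second, the claim that Minkowski symmetrization of $T$ ``should only decrease'' $\int h_1(\mathcal{L}h_2)\,d\sigma$ while preserving $b_{0,0}$ is an unproven monotonicity statement that is essentially as strong as the conjecture itself (granting it for all directions and iterating to the ball would prove the result, so it cannot be easier). Your diagnosis in the last paragraph --- that the product of averages on the left cannot be obtained by integrating any pointwise or harmonic-by-harmonic inequality, and that the sign-indefiniteness of the high-order odd harmonics of a general convex body is the obstruction --- is accurate, and is precisely why the paper leaves this as a conjecture.
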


}
\end{rem}

\section{Mixed Volumes and the Information Functional}  \label{Section-relation-with-entropy}

Let $K \in \K^n$. Recall that the information functional
$I$ is defined by  $I(K)=|K|/|\partial K|$, where $|K|$
stands for the volume of $K$, and $|\partial K|$ for its
surface area.
As mentioned in the introduction, the functional $I$ was
introduced and studied in~\cite{DCT}. In the same paper, it
was asked whether $I$ satisfies a Brunn--Minkowski type
inequality, i.e.,  whether for any $K_1,K_2 \in \K^n$ one has
$I(K_1+K_2) \geq I(K_1) + I(K_2)$, or a weaker property,
whether $I$ is monotone with respect to Minkowski addition,
namely, satisfies $I(K_1+K_2) \geq I(K_1)$, for every
$K_1,K_2 \in {\cal K}^n$.   
A counterexample to the above Brunn--Minkowski type
inequality was given in~\cite{FGM}. Although this was not
pointed out explicitly in~\cite{FGM}, this example is also
a counterexample for the monotonicity of the information
functional $I$ on the class ${\mathcal K}^n$.

We next turn to the proof of  Proposition~\ref{Prop_Equiv-To-Info-Monoton},
and in the next subsection we shall give another example
where both inequalities in Proposition~\ref{Prop_Equiv-To-Info-Monoton}
fail to hold. 


\begin{proof}[{\bf Proof of Proposition~\ref{Prop_Equiv-To-Info-Monoton}}]
Let $T\in\K^n$. Both directions of the equivalence follow from the fact that inequality ${(i)}$ is in a sense 
the ``linearization" of inequality ${(ii)}$. Indeed, assume ${(i)}$ holds. 
 For every $A\in \K^n$ and $\lambda\in[0,1]$, set $A_\lambda=A+
\lambda T$ and $f_A(\lambda)=I(A_\lambda)$. We wish to show that  $f_A(0)\le
f_A(1)$. To this end we shall prove that $f_A'\ge 0$ in the interval $[0,1]$. In fact, since $f_A(\lambda+h)=
f_{A_\lambda}(h)$, and $A$ is an arbitrary convex body, it is enough to prove
$f_A'(0)\ge 0$ for every $A\in\K^n$. Denote
${\cal V}_i=V(T[i],A[n-i])$ for $1\le i\le n$, and ${\cal W}_i=V(T[i],A[n-1-i],B_2^n)$
for $1\le i\le n-1$. In particular ${\cal V}_0=|A|,$ and $n{\cal W}_0= |\partial A|$. With these notation one has
\begin{eqnarray*}
 |A_\lambda | = V(A_\lambda,\ldots,A_\lambda)=\sum_{i=0}^n\binom{n}{i}
\lambda^i {\cal V}_i=|A|+n {\cal V}_1\lambda +o(\lambda),
\end{eqnarray*}
and similarly
\begin{eqnarray*}
 | \partial A_\lambda | = nV(A_\lambda,\dots,A_\lambda,B^n_2)=n\sum_{i=0}^{n-1} \binom{n-1}{i}\lambda^i {\cal W}_i
= | \partial A |+n(n-1){\cal W}_1\lambda+o(\lambda). 
\end{eqnarray*}
Combining these relations one has
\begin{eqnarray*}
I(A_\lambda)&=&I(A) \, \frac{1+n\frac{{\cal V}_1}{{\cal V}_0}\lambda+o(\lambda)}{1+(n-1)\frac{{\cal W}_1}{{\cal W}_0}\lambda+o(\lambda)}\\ 
& = & I(A) \left(1+\left(n \dfrac{{\cal V}_1}{{\cal V}_0}-(n-1)\frac{{\cal W}_1}{{\cal W}_0}\right)
\lambda+o(\lambda)\right).
\end{eqnarray*}
Thus, we conclude that \begin{equation} \label{eq-for-derivative-of-f} f_A'(0)=I(A)\cdot \left(
n\frac{{\cal V}_1}{{\cal V}_0}-(n-1)\dfrac{{\cal W}_1}{{\cal W}_0}\right). \end{equation}
Since inequality ${(i)}$ states exactly that ${\cal W}_0 {\cal V}_1\ge \frac{n-1}{n} {\cal W}_1{\cal V}_0$,  we conclude that $f'_A(0) \geq 0$, and hence inequality $(ii)$ holds.
Conversely, assume that inequality ${(i)}$ does not hold,  
that is, there exists $A\in\K^n$ such that ${\cal W}_0 {\cal V}_1< \frac{n-1}{n} {\cal W}_1{\cal V}_0$.
From~$(\ref{eq-for-derivative-of-f})$ it follows
that
$f_A'(0)<0$. From the continuity property of mixed volumes we conclude  that 
there exists $
\delta>0$ such that for all $\lambda\in[0,\delta]$, one has
$f_A'(\delta)<0$, and hence  
$f_A$ is strictly decreasing on $[0,\delta]$. From this
it follows that $I(A+\delta T)<I(A)$, or equivalently,
$I(\tilde{A}+T)<I(\tilde{A})$ for $\tilde{A}=A/\delta$. This
completes the proof of Proposition~\ref{Prop_Equiv-To-Info-Monoton}.
\end{proof}

\subsection{An example without monotonicity } \label{section-counter-example}

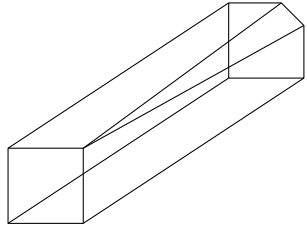
\begin{figure}[H]
\centering
\begin{tikzpicture} [line join=bevel,z=-5.5]
\coordinate (A1) at (1 ,1 ,10);  
\coordinate (A2) at (0 ,1 ,10);
\coordinate (A3) at (1 ,0 ,10);   
\coordinate (A4) at (0 ,0 ,10);

\coordinate (B12) at (1.7 ,1   ,0); 
\coordinate (B13) at (2   ,0.7 ,0);
\coordinate (B2) at (1   ,1 ,0);
\coordinate (B3) at (2   ,0 ,0);   
\coordinate (B4) at (1   ,0 ,0);

\draw (A1)           -- (A3) -- (A4) -- (A2) -- cycle; 
\draw (B12) -- (B13) -- (B3) -- (B4) -- (B2) -- cycle; 

\draw (A1)  -- (B12); 
\draw (A1)  -- (B13); 
\draw (A2)  -- (B2) ; 
\draw (A3)  -- (B3) ; 
\draw (A4)  -- (B4) ; 

\end{tikzpicture}
\caption{A counterexample to the monotonicity of the information functional $I$}
\label{fig:bla}
\end{figure}

In this subsection we provide a simple counterexample to the
monotonicity property of the functional $I$ for $n \geq 3$.
More precisely,  we give an example of a pair of convex
bodies $T,A \in K^n$ for which the two equivalent inequalities
in Proposition~\ref{Prop_Equiv-To-Info-Monoton} fail to hold.

\begin{prop} \label{prop-counter-exam}
If $n\ge 3$, then there exist $A,T\in\K^n$ such that $$I(A+T)<I(A).$$
\end{prop}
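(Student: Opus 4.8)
The plan is to use Proposition~\ref{Prop_Equiv-To-Info-Monoton} to turn the statement into a transparent geometric inequality, and then to break that inequality by an explicit three--dimensional body. By the equivalence in that proposition (or directly from the derivative formula~\eqref{eq-for-derivative-of-f}), it suffices to produce a body $A$ and a segment $T=[0,u]$, $u\in S^{n-1}$, with $f_A'(0)<0$: once $f_A'(0)<0$, the continuity of mixed volumes gives $f_A<0$ on some $[0,\delta]$ and hence $I(A+\delta T)<I(A)$, exactly as in the proof of Proposition~\ref{Prop_Equiv-To-Info-Monoton}; and if one insists that $T$ be a genuine body, replacing the segment by a sufficiently thin box preserves the strict inequality by continuity.

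First I would evaluate the ingredients of~\eqref{eq-for-derivative-of-f} for a segment. Using $V([0,u],K_2,\dots,K_n)=\tfrac1n\,\nu(K_2|_u,\dots,K_n|_u)$ together with the interpretation of $\nu(\,\cdot\,[n-2],B_2^{n-1})$ as an $(n-1)$--dimensional surface area, one gets ${\cal V}_1=\tfrac1n\,\mathrm{vol}_{n-1}(A|_u)$ and ${\cal W}_1=\tfrac{1}{n(n-1)}\,|\partial(A|_u)|$, while ${\cal V}_0=|A|$ and $n{\cal W}_0=|\partial A|$. Substituting into~\eqref{eq-for-derivative-of-f}, both combinatorial constants cancel and $f_A'(0)<0$ collapses to the clean criterion
\[ \frac{\mathrm{vol}_{n-1}(A|_u)}{|A|}<\frac{|\partial(A|_u)|}{|\partial A|},\qquad\text{equivalently}\qquad I(A)>I_{n-1}(A|_u), \]
where $I_{n-1}$ is the information functional in dimension $n-1$. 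In words: monotonicity fails for the segment $[0,u]$ exactly when projecting $A$ onto $u^\perp$ strictly \emph{decreases} the information functional.

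Next I would reduce to $n=3$. Given a body $A_0\subset\R^3$ and $u\in S^2$, set $A=A_0\times[0,1]^{n-3}\subset\R^n$ and keep the same direction $u$. Then $|A|=|A_0|$, $|\partial A|=|\partial A_0|+2(n-3)|A_0|$, and $A|_u=(A_0|_u)\times[0,1]^{n-3}$ with $|\partial(A|_u)|=|\partial(A_0|_u)|+2(n-3)|A_0|_u|$. When one cross-multiplies the criterion, the $2(n-3)$--terms cancel, so $I(A)>I_{n-1}(A|_u)$ in $\R^n$ holds if and only if $I(A_0)>I_2(A_0|_u)$ in $\R^3$. Hence a single three--dimensional example suffices for all $n\ge 3$.

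Finally I would build the three--dimensional body, and this is where the difficulty lies. The criterion fails for the ball ($I(B_2^3)=\tfrac13<\tfrac12=I_2(B_2^2)$), and for any right prism projected along its axis one has $I(A_0)<I_2(C)=I_2(A_0|_u)$, where $C$ is the cross-section — the deficit coming precisely from the two caps perpendicular to $u$; the prism approaches equality only in the limit, always from the wrong side. (This borderline behaviour is consistent with Section~\ref{Sec_Vol-n=2}: in the plane the bound $I_2(K)\le\tfrac12 w_{\min}(K)$ forces $I(K)\le I_1(K|_u)$ for every $u$, which is why the effect is genuinely three--dimensional.) To cross the threshold I would take $A_0$ to be the slanted, truncated box of Figure~\ref{fig:bla} and a direction $u$ chosen so that the slanting truncation facet is seen almost edge-on: the shadow $A_0|_u$ then gains an extra bounding edge that raises its perimeter relative to its area, pushing $I_2(A_0|_u)$ below the prism value, while the truncation removes little volume so that $I(A_0)$ remains above it. The main obstacle — and the only real computation — is to tune the proportions of the slab, the size and angle of the truncation, and the direction $u$ so that the four elementary quantities $|A_0|$, $|\partial A_0|$, $\mathrm{vol}_2(A_0|_u)$, $|\partial(A_0|_u)|$ satisfy the strict inequality; since the untruncated prism sits exactly on the boundary, the verification must carefully isolate the first-order gain produced by the truncation.
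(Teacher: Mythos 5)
Your reduction is correct and matches the paper's: taking $T=[0,u]$ and invoking Proposition~\ref{Prop_Equiv-To-Info-Monoton} (or formula~\eqref{eq-for-derivative-of-f} directly), the problem becomes exhibiting a body $A$ with $I(A)>I_{n-1}(A|_u)$, and your product--with--a--cube observation reducing everything to $n=3$ is a valid extra step that the paper does not need. But the argument stops exactly where the content is: you never produce a body satisfying the criterion, and you explicitly defer ``the only real computation.'' Worse, the mechanism you sketch points the wrong way. You propose to push $I_2(A_0|_u)$ \emph{down} by arranging that the shadow ``gains an extra bounding edge that raises its perimeter relative to its area.'' The shadow of a convex body is convex, and truncating the body can only shrink the shadow; for nested convex planar sets the perimeter is monotone, so the shadow's perimeter cannot increase at all. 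Moreover, shaving a sliver of width $\varepsilon$ off a convex planar set decreases the area by $O(\varepsilon^2)$ but the perimeter by $\Theta(\varepsilon)$, so it \emph{raises} $I_2$ of the shadow --- the opposite of what your plan requires.

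The paper's construction uses the dual mechanism: keep the shadow fixed and raise $I(A)$. Take $A=\bigl(Q+M[0,e_n]\bigr)\setminus D$, where $Q$ is the unit $(n-1)$-cube and $D=\mathrm{conv}\{0,\varepsilon e_1,\dots,\varepsilon e_{n-1},Me_n\}$ is a long thin simplex shaved off along a vertical edge of the cylinder (this is what Figure~\ref{fig:bla} depicts). Then $A|_{e_n}=Q$ is unchanged, the volume drops only by $M\varepsilon^{n-1}/n!$, while the surface area drops by a positive multiple of $M\varepsilon^{n-2}$: the new facet $F_0$ has area at most $\sqrt{n}\,M\varepsilon^{n-2}/(n-1)!$, strictly smaller than the total area $\approx (n-1)M\varepsilon^{n-2}/(n-1)!$ of the boundary pieces it replaces, because $\sqrt{n}<n-1$ for $n\ge 3$. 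Hence for $\varepsilon$ small the ratio $|A|/|\partial A|$ rises above the prism value by an amount of order $M\varepsilon^{n-2}$, and for $M$ large this beats the $O(1/M)$ deficit of the untruncated prism, giving $I(A)>I(Q)=I(A|_{e_n})$. This quantitative step is precisely what your proposal is missing, and it cannot be waved away: as you note yourself, the untruncated prism lies on the wrong side of the inequality for every $M$, so the conclusion hinges entirely on verifying that the truncation produces a first-order gain of the correct sign.
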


\begin{proof}[\bf Proof of Proposition~\ref{prop-counter-exam}]
It is enough to
find $A,T$ violating inequality $(i)$ of Proposition \ref{Prop_Equiv-To-Info-Monoton}. To do
that, we use an interval $T=[0,u]$, say for $u=e_n$. In that case,
inequality $(i)$ of Proposition \ref{Prop_Equiv-To-Info-Monoton}
becomes:
\begin{equation} \label{ineq-for-I-of-proj} I(A|_u)\ge I(A). \end{equation}
Here, to ease notation, we use $I$ to denote both the $n$-dimensional
information of $A$, and the $(n-1)$-dimensional information
of its projection to $u^\perp$, denoted by $A|_u$. Indeed,
inequality~$(\ref{ineq-for-I-of-proj})$ holds because $nV(B,T,A[n-2])=
\nu(B|_u,A|_u[n-2])$, $nV(T,A[n-1])=\nu(A|_u[n-1])$, and the
$(n-2)$-dimensional surface area of $A|_u$ is given by
${\rm Vol}_{n-2}(\partial A|_u)=(n-1)\nu(B|_u,A|_u[n-2])$.

For $A$ we shall take a long cylindrical body and cut out a
small piece of it by intersection with a half-space (see Figure 1),
in such a way that the projection $A|_u$ is unchanged, but
the information of $A$ will slightly increase.

More precisely, let $0<\varepsilon<1<M$, and denote by
$Q=\sum_{i=1}^{n-1}[0,e_i]$ the $(n-1)$-dimensional unit cube.
Let $D$ be the $(n+1)$-simplex with vertices
$0,\varepsilon e_1,\dots,\varepsilon e_{n-1},M e_n$, and
opposite facets $F_0,\dots,F_n$, respectively. Finally, set
$$A:=\left(Q+M[0,e_n]\right)\setminus D.$$ Note that $A|_u= Q$
and hence $I(A|_u)=\frac{1}{2(n-1)}$. Moreover, a direct
computation gives $|F_n|=\frac{\varepsilon^{n-1}}{(n-1)!}$,  $|F_i|=\frac{M\varepsilon^{n-2}}{(n-1)!}$ for $1\le i\le n-1$,
and 
\begin{eqnarray*} |F_0| & = &|\left({\rm diag}\{1,\dots,1,M/\varepsilon\}\right){\rm conv}\{\varepsilon e_i
\}_{i=1}^n|<\frac{M}{\varepsilon}|{\rm conv}\{\varepsilon e_i
\}_{i=1}^n| \\
& = & M\varepsilon^{n-2}|{\rm conv}\{e_i\}_{i=1}^n|= 
\frac{M\varepsilon^{n-2}\sqrt{n}}{(n-1)!}.\end{eqnarray*}

From this we conclude that
$$ |\partial(A)|=2M(n-1)+2- \sum_{i=1}^n|F_i|+|F_0|
<2M(n-1)+2-\frac{M\varepsilon^{n-2}}{(n-2)!}\left(1-\frac
{\sqrt{n}}{n-1}\right).$$ Since $|D|=\frac{M\varepsilon^{n-1}}{n!}$,
one has $$\frac{I(A)}{I(A|_u)}>\frac
{2M(n-1)\left(1-\frac{\varepsilon^{n-1}}{n!}\right)}
{2M(n-1)\left(
1+\frac{1}{M(n-1)}-\frac{\varepsilon^{n-2}}{2(n-1)!}\left(1-\frac
{\sqrt{n}}{n-1}\right)
\right)}.$$
Thus, by choosing $\varepsilon$ and $M$ such that:
$$ \frac{1}{M(n-1)}+\frac{\varepsilon^{n-1}}{n!}<
\frac{\varepsilon^{n-2}}{2(n-1)!}\left(1-\frac
{\sqrt{n}}{n-1}\right),$$ we have shown that $I(A)>I(A|_u)$.
This choice is indeed possible since for $n\ge 3$ the $\varepsilon^{n-2}$
coefficient is positive. The proof of Proposition~\ref{prop-counter-exam} is now complete.
\end{proof}

\section{Mixed Volumes in the Plane}\label{Sec_Vol-n=2}

In this section we show that in the $2$-dimensional case
inequality~$(\ref{MV-general-(false)-statement})$ for mixed
volumes always holds (cf.~\cite{BW} for a speical case). In
particular, it follows from this and Proposition
~\ref{Prop_Equiv-To-Info-Monoton} that for $n=2$ the information
functional $I$ is monotone with respect to Minkowski addition.

\begin{prop}\label{Thm_R2-General} Let $K,T,A\in\K^2$. Then 
\begin{equation}\label{main-ineq-in-dim2}
V(K,A)
V(T,A)\ge\frac{1}{2}V(K,T)V(A,A). \end{equation} Moreover,   equality holds
if and only if one of the following cases holds:  
(i) $K$ and $T$ are intervals, and $A$ is a parallelogram
whose edges are parallel to $K$ and $T$ 
(ii) $K$ and $A$, or $T$ and $A$ (or all) are contained in parallel intervals 
(iii) $A$ is a singleton. 
\end{prop}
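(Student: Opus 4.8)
The plan is to exploit the fact that in the plane the mixed volume is a \emph{Lorentzian} bilinear form on support functions, for which every convex body is a forward causal vector; the inequality then drops out of the reverse Cauchy--Schwarz inequality combined with Minkowski's inequality, while the equality analysis reduces to the case of two segments.

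First I would set up the quadratic structure. Expanding each support function as a Fourier series on $S^1$ and using the planar identity $V(K,L)=\frac12\int_0^{2\pi}(h_Kh_L-h_K'h_L')\,d\theta$, one obtains
\[ V(K,L)=\pi a_0^Ka_0^L-\frac{\pi}{2}\sum_{m\ge 2}(m^2-1)\big(a_m^Ka_m^L+b_m^Kb_m^L\big), \]
where $a_m^K,b_m^K$ are the Fourier coefficients of $h_K$. Thus $V(K,L)=p_Kp_L-\langle K,L\rangle$, where $p_K=\sqrt{\pi}\,a_0^K=\sqrt{\pi}\,M^*(K)\ge 0$ and $\langle\cdot,\cdot\rangle$ is the positive semi-definite form carried by the harmonics of order $m\ge 2$ (the order-one harmonics span a null subspace corresponding to translations, which is harmless since everything is translation invariant). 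The key structural fact is that $V$ is Lorentzian: positive on the single direction $p$ and negative semi-definite on its orthogonal complement. Since $V(K,K)=p_K^2-\langle K,K\rangle=\Vol(K)\ge 0$, every convex body satisfies $\langle K,K\rangle\le p_K^2$, i.e.\ it is a forward causal vector.

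Next I would prove the inequality assuming first that $\Vol(A)>0$, so that $A$ is timelike. Decompose $K$ and $T$ along $A$ and its $V$-orthogonal complement: $K=\tfrac{V(K,A)}{V(A,A)}A+K^\perp$ and $T=\tfrac{V(T,A)}{V(A,A)}A+T^\perp$, with $V(K^\perp,A)=V(T^\perp,A)=0$. A direct expansion gives $V(K,T)=\tfrac{V(K,A)V(T,A)}{V(A,A)}+V(K^\perp,T^\perp)$, so \eqref{main-ineq-in-dim2} is equivalent to $V(K,A)V(T,A)\ge V(A,A)\,V(K^\perp,T^\perp)$. If $V(K^\perp,T^\perp)\le 0$ this is immediate, since mixed volumes of convex bodies are non-negative. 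Otherwise, because $A^\perp$ is negative semi-definite, Cauchy--Schwarz there yields $V(A,A)\,V(K^\perp,T^\perp)\le\sqrt{(a-b)(c-d)}$, where, using $-V(A,A)V(K^\perp,K^\perp)=V(K,A)^2-V(A,A)V(K,K)$, one sets $a=V(K,A)^2\ge b=V(A,A)V(K,K)\ge 0$ and $c=V(T,A)^2\ge d=V(A,A)V(T,T)\ge 0$; here the inequalities $a\ge b$ and $c\ge d$ are exactly Minkowski's inequality in the plane. The claim then reduces to the elementary $(a-b)(c-d)\le ac=V(K,A)^2V(T,A)^2$. The degenerate cases $\Vol(A)=0$ are handled directly: if $A$ is a point all three mixed volumes vanish, and if $A$ is a null segment the right-hand side vanishes while the left-hand side is non-negative.

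Finally, for the equality discussion I would trace the two estimates used. When $\Vol(A)>0$, equality in $(a-b)(c-d)\le ac$ forces $\Vol(K)=\Vol(T)=0$, i.e.\ $K$ and $T$ are segments (apart from the degenerate possibility that one body collapses to a point); once $K=[0,u]$ and $T=[0,v]$, the inequality becomes the statement that $\Vol(A)$ is at most the area $\mathrm{width}_{u^\perp}(A)\,\mathrm{width}_{v^\perp}(A)/|\sin\angle(u,v)|$ of the parallelogram cut out by the two slabs containing $A$, with equality precisely when $A$ fills that parallelogram, whose edges are parallel to $K$ and $T$ --- this is case~(i). The branch $\Vol(A)=0$ produces cases~(ii) (equality forces one of $K,T$ to be a segment parallel to the segment $A$) and~(iii). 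I expect the equality analysis to be the most delicate part: reconciling the Lorentzian proportionality conditions with the geometric parallelogram description, and correctly disposing of the point-degeneracies, requires care, whereas the inequality itself is short once the Lorentzian picture is in place.
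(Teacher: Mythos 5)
Your argument is correct, but it is a genuinely different proof from the one in the paper. The paper's route is very short: it takes the quadratic $P(\lambda)=V(A,A)\lambda^2+2V(T,A)\lambda+V(T,T)$, invokes the Bonnesen-type inequality $P(-R_A(T))\le 0$ for the outer radius $R_A(T)$ of $T$ relative to $A$, normalizes $R_A(T)=1$ to get $V(T,A)\ge\tfrac12 V(A,A)$, and multiplies by the monotonicity bound $V(K,A)\ge V(K,T)$ coming from $T\subset A+y$; the equality analysis then runs through $V(T,T)=0$ and the minimal circumscribed parallelogram of $A$ with sides parallel to $K$ and $T$. You instead exploit the Lorentzian signature of the planar mixed-area form (one positive direction, a null plane of translations, negative definite on harmonics of order $\ge 2$), split $K$ and $T$ along $A$ and its $V$-orthogonal complement, and finish with Cauchy--Schwarz on the negative semi-definite complement together with the elementary bound $(a-b)(c-d)\le ac$; your identity $V(K,T)=\tfrac{V(K,A)V(T,A)}{V(A,A)}+V(K^\perp,T^\perp)$ and the subsequent estimates are all verified correctly, and the reduction of the equality case to two segments and the circumscribed parallelogram lands on the same cases (i)--(iii). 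What the paper's proof buys is brevity, at the cost of importing the Bonnesen inequality as a black box; what yours buys is a self-contained explanation of where the factor $\tfrac12$ comes from (it is exactly the splitting of $V(K,T)V(A,A)$ into the ``parallel'' part $V(K,A)V(T,A)$ plus a correction controlled by Minkowski's inequality), and it makes Minkowski's inequality itself a byproduct of the same signature fact rather than a separate input. The only places needing extra care in a written-up version are the ones you already flag: the degenerate branches where one of $K$, $T$ is a point (where equality holds but the paper's trichotomy is itself slightly loose) and the fact that $K^\perp$, $T^\perp$ are differences of support functions rather than bodies, so non-negativity of $V(K^\perp,T^\perp)$ cannot be assumed --- which you correctly do not assume.
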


We start with some preparations. Denote the inner and outer
radii of $T$ with respect to $A$ by $r=r_A(T)$, and $R=R_A(T)$
respectively. These are the optimal numbers satisfying that
$rA+x\subset T\subset RA+y$ for some $x,y\in\R^n$.  It is well
known that for $A,T\in\K^2$, the polynomial
$P(\lambda)=V(A,A)\lambda^2 + 2V(T,A)\lambda + V(T,T)$ has only
real roots (e.g., by Minkowski's inequality) which are clearly non-positive. Moreover, a
Bonnesen-type inequality (see pages 323-324 in~\cite{Sch}) states that
$$\label{Bonnesen-ineq} \lambda^-\le -R_A(T)\le -r_A(T)\le \lambda^+, $$
where $\lambda^{\pm}$ are the roots of $P(\lambda)$. In particular
$$P(-R)\le 0,\qquad P(-r)\le 0.$$
We are now in a position to prove Proposition~\ref{Thm_R2-General}.

\begin{proof}[\bf Proof of Proposition~\ref{Thm_R2-General}]
If $V(A,A)=0$, or $T$ is a singleton, we are done. Otherwise,
we have $0<R_A(T)<\infty$. Since the inequality is homogeneous
in each of the bodies, we may assume $R_A(T)=1$.
By the remark preceding the proof:
$$0\ge P(-1)=V(A,A)-2V(T,A)+V(T,T)\ge
V(A,A)-2V(T,A),$$ and hence
\begin{equation}\label{eq-2D-Bonnesen}
V(T,A) \geq \frac{1}{2}V(A,A) .
\end{equation}
Combining the latter with $V(K,A) \geq V(K,T)$ (which is due
to inclusion), proves $(\ref{main-ineq-in-dim2})$.
Next, we characterize the equality case. In the case where $V(A,A)=0$, either $A$ is a singleton, and there is equality, or $A$ is contained in an interval and at least one of $V(K,A)$,$V(K,T)$ must equal $0$, that is, we are in equality case (ii). 
Assume $V(A,A)>0$. A
necessary condition for equality in $(\ref{eq-2D-Bonnesen})$
is that $V(T,T)=0$, i.e. $T$ is contained in an interval. 
By symmetry, $K$ must be contained in an interval too. Next,
let $P$ be the minimal parallelogram containing $A$, which
has edges parallel to $K$ and $T$. Since $V(K,A)=V(K,P)$ and
$V(T,A)=V(T,P)$, we have:
$$V(K,A)V(T,A)=V(K,P)V(T,P)=
\frac{1}{2}V(T,K)V(P,P)\ge \frac{1}{2}V(T,K)V(A,A).$$
Equality holds above only if $V(P,P)=V(A,A)$, which implies
$A=P$.
\end{proof}
 
\begin{cor} \label{mon-of-I-in-dim2}
For every $A,T\in\K^2$, $I(A+T)>I(A)$.
\end{cor}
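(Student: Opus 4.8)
The plan is to upgrade the non-strict monotonicity $I(A+T) \ge I(A)$ — which already follows by combining Proposition~\ref{Prop_Equiv-To-Info-Monoton} with Proposition~\ref{Thm_R2-General} — to the strict inequality, by reading off the sign of the relevant derivative and then invoking the equality characterization in Proposition~\ref{Thm_R2-General}.

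First I would recall the setup from the proof of Proposition~\ref{Prop_Equiv-To-Info-Monoton}: put $A_\lambda = A + \lambda T$ and $f_A(\lambda) = I(A_\lambda)$, so that $f_A(\lambda+h) = f_{A_\lambda}(h)$ and hence $f_A'(\lambda) = f_{A_\lambda}'(0)$. In dimension $n = 2$, specializing formula~(\ref{eq-for-derivative-of-f}) with ${\cal V}_0 = V(A,A)$, ${\cal V}_1 = V(T,A)$, ${\cal W}_0 = V(A,B_2^2)$, and ${\cal W}_1 = V(T,B_2^2)$ gives
$$f_A'(0) = I(A)\left(2\frac{V(T,A)}{V(A,A)} - \frac{V(T,B_2^2)}{V(A,B_2^2)}\right).$$
Since $V(A,A) > 0$ and $V(A,B_2^2) > 0$ for a body $A$, the sign of $f_A'(0)$ agrees with that of $2 V(T,A)V(A,B_2^2) - V(T,B_2^2)V(A,A)$, which is strictly positive precisely when inequality~(\ref{main-ineq-in-dim2}) holds strictly with $K = B_2^2$.

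Next I would apply the strict form of Proposition~\ref{Thm_R2-General} with $K = B_2^2$ and with $A$ replaced by $A_\lambda$. Because $A$ is a convex body, $A_\lambda \supseteq A$ has non-empty interior, so $A_\lambda$ is neither a singleton nor contained in an interval, and $K = B_2^2$ is not an interval. Hence none of the equality cases (i)--(iii) of Proposition~\ref{Thm_R2-General} can occur, and the inequality is strict, giving $f_A'(\lambda) = f_{A_\lambda}'(0) > 0$ for every $\lambda \in [0,1]$.

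Finally, since $f_A$ is a smooth function of $\lambda$ on $[0,1]$ (a ratio of polynomials in $\lambda$ with strictly positive denominator) with everywhere positive derivative, it is strictly increasing, whence $I(A+T) = f_A(1) > f_A(0) = I(A)$. The only point requiring care — and the natural place for the argument to be scrutinized — is the uniform-in-$\lambda$ exclusion of the equality cases of Proposition~\ref{Thm_R2-General}; this reduces to the elementary observations that $A_\lambda$ always has non-empty interior and that the Euclidean ball is never an interval, so strictness in fact holds for every $\lambda$ and for arbitrary $T$.
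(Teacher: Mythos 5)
Your argument is correct and follows essentially the same route as the paper: derive the non-strict inequality from Propositions~\ref{Prop_Equiv-To-Info-Monoton} and~\ref{Thm_R2-General}, then obtain strictness by observing that $f_A'(\lambda)>0$ for all $\lambda$ because $K=B_2^2$ (and the body $A_\lambda$, which has non-empty interior) cannot satisfy any of the equality cases of Proposition~\ref{Thm_R2-General}. Your write-up is merely more explicit than the paper's one-line justification about checking this at every $\lambda\in[0,1]$ and about the smoothness of $f_A$.
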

\begin{proof}[{\bf Proof of Corollary~\ref{mon-of-I-in-dim2}}]
Combining Proposition \ref{Prop_Equiv-To-Info-Monoton} with Proposition
\ref{Thm_R2-General} proves $I(A+T)\ge I(A)$. To show strict inequality
we only note that the derivative $f_A'$ is strictly positive, since
$K=B_2$ does not qualify for the equality case of Theorem
\ref{Thm_R2-General}.
\end{proof}

Finally, still in the $2$-dimensional case, we prove inequality~$(\ref{MV-general-(false)-statement})$  with an improved constant in the case where $A=B_2^2$ is the Euclidean disk. In fact, the following theorem amounts to Theorem \ref{Thm-about-mixed-volumes} in dimension 2 without the assumption that one of the bodies is a zonoid. 

\begin{prop}\label{Prop_2-dim-sharp-ineq}
Let $K,T\in\K^2$. Then: 
\begin{equation} \label{2-dim-case-A=B}
V(T,B^2_2)V(K,B^2_2)\ge\frac{2}{\pi}V(T,K)V(B^2_2,B^2_2)
,\end{equation} with equality if and only if $K$ and $T$ are orthogonal
intervals.
\end{prop}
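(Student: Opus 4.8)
The plan is to prove inequality~$(\ref{2-dim-case-A=B})$ for arbitrary planar convex bodies $K,T$ (not just zonoids), using the same reduction to an integral inequality on the circle as in the proof of Theorem~\ref{Thm-about-mixed-volumes}, but now exploiting the special structure of the plane to remove the zonoid hypothesis. Specifically, by the quermassintegral formulas already recorded in the proof of Theorem~\ref{Thm-about-mixed-volumes}, applied with $n=2$, I would rewrite each term: $V(K,B_2^2)=\kappa_2\int_{S^1}h_K\,d\sigma$ (and similarly for $T$), while $V(T,K)$ can be expressed via the mixed-area integral representation $V(T,K)=\kappa_2\int_{S^1}h_K\bigl(h_T+\triangle_S h_T\bigr)\,d\sigma$ (the $n=2$ case of (5.3.17) in~\cite{Sch}, as used in the second remark). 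With these substitutions, and since $V(B_2^2,B_2^2)=\pi=\kappa_2$, inequality~$(\ref{2-dim-case-A=B})$ reduces to an inequality purely among integrals of $h_K$ and $h_T$ against the measure $\sigma$ on $S^1$.

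The key step is then to pass to the Fourier/spherical-harmonic expansion on $S^1$, exactly as in the second Remark following Theorem~\ref{Thm-about-mixed-volumes}. Writing $h_K=\sum_m k_m$ and $h_T=\sum_m t_m$ in terms of their circular harmonics, where the $m$-th eigenspace satisfies $\triangle_S Y_m=-m^2 Y_m$ (since $n=2$ gives eigenvalue $-m(m+n-2)=-m^2$), the operator $h_T\mapsto h_T+\triangle_S h_T$ multiplies the $m$-th component by $(1-m^2)$. Therefore inequality~$(\ref{2-dim-case-A=B})$ becomes a statement comparing $k_0 t_0$ (the zeroth coefficients, i.e.~the products of the mean widths up to normalization) against a weighted sum $\sum_m (1-m^2)\langle k_m,t_m\rangle$, with constant $\tfrac{2}{\pi}$. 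The crucial observation is that for $m\ge 2$ the factor $(1-m^2)$ is negative, so those terms only help; the only potentially harmful term is $m=1$, where $(1-m^2)=0$. This is exactly where the plane is special: the $m=1$ component of a support function is a linear functional, corresponding to a translation of the body, and thus contributes nothing to mixed areas. Hence after discarding the $m=1$ term (by translation invariance of mixed volumes, we may center $K$ and $T$ so their first harmonics vanish), the right-hand side sum involves only $m=0$ and $m\ge 2$ terms, and the inequality follows from nonnegativity of $\int h_K\,d\sigma$, $\int h_T\,d\sigma$ for convex $K,T$ together with the negative sign of the higher-order weights.

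The main obstacle I anticipate is the sharp constant $\tfrac{2}{\pi}$ and the equality characterization, rather than the inequality itself. For the constant, one must verify that $\tfrac{2}{\pi}=\tfrac{n-1}{n}\tfrac{\kappa_{n-1}^2}{\kappa_n\kappa_{n-2}}$ specializes correctly at $n=2$ (using $\kappa_0=1$, $\kappa_1=2$, $\kappa_2=\pi$), and more subtly that the constant $\tfrac{2}{\pi}$ is forced by balancing the $m=0$ terms alone. Concretely, the $m=0$ contribution on both sides must match with equality, pinning down the constant, and then the residual inequality reduces to $\sum_{m\ge 2}(m^2-1)\langle k_m,t_m\rangle\ge 0$. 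This last sum is \emph{not} automatically nonnegative term-by-term (the harmonics $k_m,t_m$ need not be aligned), so the honest version of the argument must avoid claiming term-by-term positivity. The cleaner route, which I would adopt to sidestep this, is the geometric one used in the zonoid case: reduce $T$ to an interval $[0,u]$ by the representation of zonoids, but since we cannot assume $T$ is a zonoid, instead invoke the two-dimensional mixed-area inequality $V(K,T)\le \sqrt{V(K,K)V(T,T)}$ combined with the Minkowski-symmetrization argument of Theorem~\ref{Thm-about-mixed-volumes} directly in the plane. For the equality case, equality in the discarded higher harmonics forces $k_m=0$ or $t_m=0$ for all $m\ge 2$, which (since $h_K,h_T$ are support functions) means $K$ and $T$ are segments; matching the $m=0$ and $m=1$ data then forces these segments to be mutually orthogonal, yielding precisely the stated condition that $K$ and $T$ are orthogonal intervals.
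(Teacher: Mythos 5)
Your harmonic--expansion route contains a genuine gap, and the fallback you propose to repair it rests on a false inequality. Writing $k_m,t_m$ for the degree-$m$ circular harmonics of $h_K,h_T$, the target inequality becomes
\begin{equation*}
\tfrac{\pi}{2}\,k_0t_0 \;\ge\; k_0t_0+\sum_{m\ge 2}(1-m^2)\langle k_m,t_m\rangle ,
\end{equation*}
and two of your claims about this are wrong. First, the $m=0$ contributions do \emph{not} ``match with equality'': the constant $\tfrac{2}{\pi}$ leaves a slack factor of $\tfrac{\pi}{2}>1$ at $m=0$, and that slack is exactly what must absorb the remaining sum. Second, the terms $(1-m^2)\langle k_m,t_m\rangle$ for $m\ge 2$ do not ``only help,'' because $\langle k_m,t_m\rangle$ can be negative (the harmonics of $K$ and $T$ need not be aligned); indeed, in the claimed equality case of two orthogonal segments these higher harmonics are nonzero and their contribution is what saturates the inequality, so your equality analysis (``equality forces $k_m=0$ or $t_m=0$ for $m\ge2$, hence $K,T$ are segments'') is also incorrect. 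You partially acknowledge the alignment problem, but your proposed escape — using $V(K,T)\le\sqrt{V(K,K)V(T,T)}$ — is the \emph{reverse} of Minkowski's first inequality ($V(K,T)^2\ge V(K,K)V(T,T)$) and is false in general: for two non-parallel segments the right-hand side is $0$ while $V(K,T)>0$. So neither branch of the proposal closes.

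For comparison, the paper's proof is short and purely geometric, and uses none of this machinery. One writes $V(T,B_2^2)=\tfrac{1}{2}L(T)$ where $L(T)$ is the perimeter, uses monotonicity of mixed volumes via $T\subset R(T)B_2^2+y$ (with $R(T)$ the circumradius) to get $V(K,T)\le R(T)\,V(K,B_2^2)$, and then invokes Nitsche's inequality $L(T)\ge 4R(T)$, whose equality case ($T$ a segment) together with the symmetry of the roles of $K$ and $T$ yields the characterization of equality as a pair of orthogonal intervals. If you want to salvage a Fourier-analytic proof, you would need a genuine quantitative estimate showing
\begin{equation*}
\Bigl(\tfrac{\pi}{2}-1\Bigr)k_0t_0+\sum_{m\ge 2}(m^2-1)\langle k_m,t_m\rangle\ \ge\ 0
\end{equation*}
for support functions, e.g.\ via Cauchy--Schwarz and bounds of the form $\sum_{m\ge2}(m^2-1)\|k_m\|^2\le C k_0^2$; that is a real (and not obviously true at the sharp constant) analytic statement, not a sign observation.
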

\begin{proof}[{\bf Proof of Proposition~\ref{Prop_2-dim-sharp-ineq}}]
Denote by $R(T)$ the circumradius  of $T$ i.e., the smallest radius of a disc containing $T$, and by
$L(T)$ its perimeter.  Note that $ V(B^2_2,B^2_2)=\pi$, $V(T,B^2_2)=\frac{L(T)}{2}$, and from the monotonicity of mixed volume one has
$$ V(T,B^2_2)V(K,B^2_2) \geq  {\frac { L(T) V(K,T)} {2R(T)}}.$$
Inequality~$(\ref{2-dim-case-A=B})$  now follows from that fact  that  $L(T) \geq 4R(T)$ (see \cite{Nit}).
Moreover, $L(T)=4R(T)$ if and only if $T$ is an interval. 
Since $K$ and $T$ play a symmetric role in~$(\ref{2-dim-case-A=B})$, a 
necessary condition for equality in~$(\ref{2-dim-case-A=B})$ is that both $K$ and $T$ are intervals.
In that case, a direct computation shows that equality holds if and only if $K$ and $T$ are orthogonal. 
The  proof of  Proposition~\ref{Prop_2-dim-sharp-ineq} is now complete.
\end{proof}

\noindent Shiri Artstein-Avidan, Dan Florentin, Yaron Ostrover\\
\noindent School of Mathematical Science, Tel Aviv University, Tel Aviv, Israel.\\
\noindent {\it e-mails}: shiri@post.tau.ac.il, 
 danflore@post.tau.ac.il ,
  ostrover@post.tau.ac.il.

\end{document}